\documentclass[10pt,authoryear]{article}%
\usepackage{amsmath}
\usepackage{amsfonts}
\usepackage{mathrsfs}
\usepackage{amssymb,bbm, color}
\usepackage{enumerate}
\usepackage[hidelinks,colorlinks=true,linkcolor=blue,citecolor=blue]{hyperref}
\usepackage{graphicx}
\numberwithin{equation}{section}
\usepackage[body={15.5cm,21cm}, top=3cm]{geometry}%
\setcounter{MaxMatrixCols}{30}
 \usepackage{natbib}
\bibliographystyle{apalike}
\providecommand{\U}[1]{\protect\rule{.1in}{.1in}}
\providecommand{\U}[1]{\protect \rule{.1in}{.1in}}
\newtheorem{theorem}{Theorem}[section]

\newtheorem{corollary}[theorem]{Corollary}

\newtheorem{definition}[theorem]{Definition}
\newtheorem{example}[theorem]{Example}

\newtheorem{lemma}[theorem]{Lemma}

\newtheorem{proposition}[theorem]{Proposition}
\newtheorem{remark}[theorem]{Remark}

\newenvironment{proof}[1][Proof]{\noindent \textbf{#1.} }{\  \rule{0.5em}{0.5em}}

\def \P{\mathsf{P}}
\def \E{\mathsf{E}}
\def \hE {\widehat{\mathbb E}}
\begin{document}
	\title{Stochastic Differential Equations Driven by $G$-Brownian Motion with Mean Reflections}
	\author{Hanwu Li\thanks{Research Center for Mathematics and Interdisciplinary Sciences, Shandong University, Qingdao 266237, Shandong, China. Email: lihanwu@sdu.edu.cn.}
	\thanks{Frontiers Science Center for Nonlinear Expectations (Ministry of Education), Shandong University, Qingdao 266237, Shandong, China.}
	\and Ning Ning\thanks{Department of Statistics,
		Texas A\&M University, College Station, Texas, USA. Email: patning@tamu.edu.}}
	\date{}
	\maketitle
	\begin{abstract}
	In this paper, we study the mean reflected  stochastic differential equations driven by $G$-Brownian motion, where the constraint depends on the expectation of the solution rather than on its paths. Well-posedness is achieved by first investigating the Skorokhod problem with mean reflection under $G$-expectation. Two approaches to constructing the solution are introduced, both offering insights into desired properties and aiding in the application of the contraction mapping method. 
	\end{abstract}
	
	\textbf{Key words}: $G$-expectation, reflected SDEs,  mean reflection
	
	\textbf{MSC-classification}: 60G65, 60H10
	
	
\section{Introduction}
We firstly give the background in Subsection \ref{sec:Background} and then state our contributions in Subsection \ref{sec:contributions}, followed with the organization of the paper in Subsection \ref{sec:organization}.

\subsection{Background} 
\label{sec:Background}	
Stochastic differential equations (SDEs) with reflecting boundaries, commonly referred to as reflected SDEs, were introduced by Skorokhod in the 1960s \citep{Skorokhod1}. 
Later, \cite{el1997reflected} introduced the reflected backward SDE (BSDE), where the first component of the solution is constrained to remain above a specified continuous process, known as the obstacle. Reflected SDEs and reflected BSDEs are intimately linked to various fields including optimal stopping problems (see, e.g., \cite{cheng2013optimal}), pricing for American options (see, e.g., \cite{el1997reflected2}), and the obstacle problem for partial differential equations (PDEs) (see, e.g., \cite{bally2002reflected}). Hence, they have attracted a great deal of attention in the probability community, such as \cite{CE,T,LS,ma2005representations, BKR,hamadene2010switching, ning2021well, ning2023multi, NING2024One} and the references therein, providing a comprehensive overview of this theory. In all the aforementioned papers, the constraints depend on the paths of the solution.

Over the past decade, \cite{Bouchard2015BSDEs} pioneered the modeling of BSDEs with mean reflection, where the terminal condition constrains the distribution of the BSDE at terminal time. Mean reflected BSDEs (MRBSDEs) were formally introduced by \cite{briand2018bsdes}. Subsequently, the forward version was proposed in \cite{Briand2020Particles}, considering the following type of mean reflected SDEs (MRSDEs): for $t\in[0,T]$,
	 \begin{equation}\label{eq2}
		\begin{cases}
		X_t=x_0+\int_0^t b(X_s)ds+\int_0^t \sigma(X_s)dW_s+A_t, \vspace{0.2cm}\\
               \E[h(X_t)]\geq 0\quad\text{and}\quad
                \int_0^T \E[h(X_t)]dA_t=0,
                \end{cases}
		\end{equation}
where $b,\sigma,h:\mathbb{R}\rightarrow \mathbb{R}$ are given Lipschitz functions and $W$ is a standard Brownian motion defined on a complete probability space $(\Omega,\mathcal{F},\P)$. 
Here, the compensating reflection component depends on the distribution of the solution. The authors established both the existence and uniqueness of the solution. MRSDEs and MRBSDEs have attracted warm interests in the probability community, which include, but are not limited to, the following: quadratic MRBSDEs \citep{hibon2017quadratic}, MRSDEs with jumps \citep{briand2020mean}, large deviation principle for the MRSDEs with jumps \citep{li2018large}, MRSDEs with two constraints \citep{falkowski2021mean}, multi-dimensional MRBSDEs \citep{qu2023multi}, and the well-posedness of MRBSDEs with different reflection restrictions \citep{falkowski2022backward,li2024backward}.

All the above mean reflected problems were considered in the classical probability space until  \cite{LiuW} and \cite{heli} studied the mean reflected BSDE driven by $G$-Brownian motion ($G$-BSDEs).
The introduction of $G$-Brownian motion and $G$-expectation was a significant development in the field of stochastic analysis \citep{P07a,P08a,P19}. The nonlinear G-expectation theory was motivated by the consideration of Knightian uncertainty, especially volatility uncertainty, and the stochastic interpretation of fully nonlinear PDEs. 
Roughly speaking, $G$-expectation can be seen as an upper expectation taking over a non-dominated family of probability measures. Under this framework,  $G$-Brownian motion and the associated $G$-It\^{o}'s calculus were established. \cite{G} obtained the well-posedness of SDEs driven by $G$-Brownian motion ($G$-SDEs). For the reflected case,  \cite{Lin} first investigated the scalar-valued $G$-SDE whose solution is required to be above a prescribed $G$-It\^{o}'s process, and later \cite{LSH} considered the reflected $G$-SDEs in non-convex domains. For  reflected $G$-BSDEs, \cite{li2018reflected}, \cite{li2017reflected} and \cite{li2021backward} tackled the lower obstacle case, the upper obstacle case and the double obstacles case, respectively. Recently, \cite{li2024doubly} established the connection of doubly reflected $G$-BSDEs to fully nonlinear PDEs with double obstacles. Notably, \cite{SWW} studied $G$-SDE whose coefficients may depend on the distribution of the solution, but without considering reflection.

\subsection{Our contributions}
\label{sec:contributions}
				
		In this paper, we introduce $G$-SDE with mean reflection in the following form: for $t\in[0,T]$,
		\begin{equation}\label{MRGSDE}
		\begin{cases}
		X_t=x_0+\int_0^t b(s,X_s)ds+\int_0^t h(s,X_s)d\langle B\rangle_s+\int_0^t \sigma(s,X_s)dB_s+A_t, \vspace{0.2cm}\\
               \hE[l(t,X_t)]\geq 0\quad\text{and}\quad
                \int_0^T \hE[l(t,X_t)]dA_t=0,
                \end{cases}
		\end{equation}
where $B$ represents $G$-Brownian motion, $\hE$ denotes $G$-expectation, and the functions $b$, $h$, $\sigma$, and $l$ satisfy certain regularity conditions, which will be elucidated later along with a review of $G$-expectation theory. The solution to $G$-SDE \eqref{MRGSDE} is a pair of processes $(X,A)$, where $A$ is a nondecreasing deterministic function, behaving in a minimal way such that the Skorokhod condition is satisfied. The well-posedness of a unique solution to $G$-SDE \eqref{MRGSDE} is established in Theorem \ref{main}, with the 
assumptions enforced on the coefficient functions being comparable to those of the non-reflected case studied by \cite{G}.

Theorem \ref{main} is achieved by first investigating the Skorokhod problem with mean reflection under $G$-expectation, which is rigorously defined in Definition \ref{SPwithMR} and practically illustrated in Example \ref{example1}.  A crucial result is the existence and uniqueness of the solution to that problem in Theorem \ref{SP}. Two approaches to constructing the solution are presented. The first approach relies on an intermediate result from \cite{LiuW}, outlined in Proposition \ref{prop3.3}, although further stochastic analysis is evidently required in this context. The second approach requires an additional assumption to establish Theorem \ref{SP} thus denoted as Theorem \ref{SP2}. However, this method establishes a link between the Skorokhod problem with mean reflection and the deterministic Skorokhod problem, leading to the acquisition of desired properties as illustrated in Proposition \ref{proposition2.4} and subsequently in Corollary \ref{corollary2.5}. By combining Theorem \ref{SP}, both construction methods, and those attained properties, we establish the existence and uniqueness of solutions for \eqref{MRGSDE}  through a contraction mapping argument. Following this, the desired moment estimate of the solution to mean reflected $G$-SDEs is attained in Proposition \ref{prop2.6}, ensuring the continuity of the compensating term $A$. However, a more refined result is needed to strengthen the analysis. Specifically, it is desired to show that the compensating term exhibits Lipschitz continuity when the loss function $l$ is sufficiently smooth, a property that is verified in Proposition  \ref{prop2.7}. 

\subsection{Organization of the paper}
\label{sec:organization}
The paper is structured as follows: Section \ref{sec:Preliminaries} provides a review of fundamental notations and results in $G$-framework. The mean reflected Skorokhod problem is defined and examined in Section \ref{sec:Skorokhod}. Section \ref{sec:MRGSDEs} focuses on establishing the well-posedness of the mean reflected $G$-SDE \eqref{MRGSDE}. Throughout the paper, the letter $C$, with or without subscripts, will denote a positive constant whose value may change for different usage.

\section{Preliminaries}
\label{sec:Preliminaries}
We review some fundamental  notions and results of $G$-expectation and $G$-stochastic calculus.  The readers may refer to  \cite{P07a,P08a,P19} for more details. For simplicity, we only consider the one-dimensional $G$-Brownian motion, noting that the results still hold for the multidimensional case. 

Let $\Omega_T=C_{0}([0,T];\mathbb{R})$, the space of
real-valued continuous functions starting from the origin, i.e., $\omega_0=0$ for any $\omega\in \Omega_T$, be endowed
with the supremum norm. Let $\mathcal{B}(\Omega_T)$ be the Borel set and $B$ be the canonical process. Set
\[
L_{ip} (\Omega_T)=\Big\{ \varphi(B_{t_{1}},...,B_{t_{n}}):  \ n\in\mathbb {N}, \ t_{1}
,\cdots, t_{n}\in\lbrack0,T], \ \varphi\in C_{b,Lip}(\mathbb{R}^{ n})\Big\},
\]
where $C_{b,Lip}(\mathbb{R}^{ n})$ denotes the set of all bounded Lipschitz functions on $\mathbb{R}^{n}$.
We fix a sublinear and monotone function $G:\mathbb{R}\rightarrow\mathbb{R}$ defined by
\begin{align}\label{GG}
	G(a)=\frac{1}{2}(\overline{\sigma}^2a^+-\underline{\sigma}^2a^-), 
\end{align}
where $0< \underline{\sigma}^2<\overline{\sigma}^2$. The associated $G$-expectation on $(\Omega_T, L_{ip}(\Omega_T))$ can be constructed in the following way. Given that $\xi\in L_{ip}(\Omega_T)$ can be represented as
$$\xi=\varphi(B_{{t_1}}, B_{t_2},\cdots,B_{t_n}),$$
set for $t\in[t_{k-1},t_k)$ with $k=1,\cdots,n$, 
\begin{displaymath}
	\widehat{\mathbb{E}}_{t}[\varphi(B_{{t_1}}, B_{t_2},\cdots,B_{t_n})]=u_k(t, B_t;B_{t_1},\cdots,B_{t_{k-1}}),
\end{displaymath}
where $u_k(t,x;x_1,\cdots,x_{k-1})$ is a function of $(t,x)$ parameterized by $(x_1,\cdots,x_{k-1})$ such that it solves the following fully nonlinear PDE defined on $[t_{k-1},t_k)\times\mathbb{R}$:
\begin{displaymath}
	\partial_t u_k+G(\partial_x^2 u_k)=0,
\end{displaymath}
whose terminal conditions are given by
\begin{displaymath}
	\begin{cases}
		u_k(t_k,x;x_1,\cdots,x_{k-1})=u_{k+1}(t_k,x;x_1,\cdots,x_{k-1},x), \qquad k<n,\\
		u_n(t_n,x;x_1,\cdots,x_{n-1})=\varphi(x_1,\cdots,x_{n-1},x).
	\end{cases}
\end{displaymath}
Hence, $G$-expectation of $\xi$ is $\widehat{\mathbb{E}}_0[\xi]$, denoted as $\widehat{\mathbb{E}}[\xi]$ for simplicity. The triple $(\Omega_T, L_{ip}(\Omega_T),\widehat{\mathbb{E}})$ is called $G$-expectation space and the process $B$ is called $G$-Brownian motion. 

For $\xi\in L_{ip}(\Omega_T)$ and $p\geq1$, we define
$$\Vert\xi\Vert_{L_{G}^{p}}=(\widehat{\mathbb{E}}|\xi|^{p}])^{1/p}.$$
The completion of $L_{ip} (\Omega_T)$ under this norm  is denote by $L_{G}^{p}(\Omega_T)$.   For all $t\in[0,T]$, $\widehat{\mathbb{E}}_t[\cdot]$ is a continuous mapping on $L_{ip}(\Omega_T)$ with respect to the norm $\|\cdot\|_{L_G^1}$. Hence, the conditional $G$-expectation $\mathbb{\widehat{E}}_{t}[\cdot]$ can be
extended continuously to the completion $L_{G}^{1}(\Omega_T)$. Furthermore, \cite{DHP11} proved that $G$-expectation has the following representation.
\begin{theorem}[\cite{DHP11}]
	\label{the1.1}  There exists a weakly compact set
	$\mathcal{P}$ of probability
	measures on $(\Omega_T,\mathcal{B}(\Omega_T))$, such that
	\[
	\widehat{\mathbb{E}}[\xi]=\sup_{\P\in\mathcal{P}}\E^{\P}[\xi], \qquad\forall \xi\in  {L}_{G}^{1}{(\Omega_T)}.
	\]
	We call $\mathcal{P}$ a set that represents $\widehat{\mathbb{E}}$.
\end{theorem}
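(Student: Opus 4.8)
The plan is to realize $\widehat{\mathbb E}$ as an upper expectation over an explicitly constructed, weakly closed family of martingale laws. Fix an auxiliary probability space $(\Omega,\mathcal F,\P_0)$ carrying a one-dimensional standard Brownian motion $W$, let $\mathbb F=(\mathcal F_t)_{t\in[0,T]}$ be its augmented filtration, and let $\mathcal A$ denote the set of $\mathbb F$-progressively measurable processes $\theta=(\theta_s)_{s\in[0,T]}$ taking values in $[\underline\sigma,\overline\sigma]$. For $\theta\in\mathcal A$ set $B^\theta_t=\int_0^t\theta_s\,dW_s$ and let $\P_\theta$ be the law of the continuous path $t\mapsto B^\theta_t$ on $(\Omega_T,\mathcal B(\Omega_T))$; put $\mathcal P_1=\{\P_\theta:\theta\in\mathcal A\}$ and let $\mathcal P$ be the closure of $\mathcal P_1$ in the weak topology of probability measures on $\Omega_T$. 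Define $\widetilde{\mathbb E}[\xi]=\sup_{\P\in\mathcal P}\E^{\P}[\xi]$, which is a sublinear expectation and in particular satisfies $|\widetilde{\mathbb E}[\xi]-\widetilde{\mathbb E}[\eta]|\le\widetilde{\mathbb E}[|\xi-\eta|]$. It then suffices to prove (i) $\mathcal P$ is weakly compact, and (ii) $\widetilde{\mathbb E}=\widehat{\mathbb E}$ on $L_{ip}(\Omega_T)$: once (ii) holds, on $L_{ip}(\Omega_T)$ we get $|\widetilde{\mathbb E}[\xi]-\widetilde{\mathbb E}[\eta]|\le\widehat{\mathbb E}[|\xi-\eta|]=\|\xi-\eta\|_{L_G^1}$, so $\widetilde{\mathbb E}$ extends continuously to $L_G^1(\Omega_T)$ and coincides there with $\widehat{\mathbb E}$, which is precisely the asserted representation.

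For (i), I would prove a Kolmogorov-type moment estimate that is uniform in the control: by the Burkholder--Davis--Gundy inequality, $\E^{\P_\theta}\big[|B_t-B_s|^4\big]=\E^{\P_0}\big[\big|\int_s^t\theta_r\,dW_r\big|^4\big]\le C\,\overline\sigma^4|t-s|^2$ with $C$ independent of $\theta\in\mathcal A$, and $B^\theta_0=0$ for all $\theta$. The standard tightness criterion for laws of continuous processes then yields relative weak compactness of $\mathcal P_1$ by Prokhorov's theorem, and since $\mathcal P$ is the weak closure of $\mathcal P_1$ it is weakly compact.

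The substantive step is (ii). Given $\xi=\varphi(B_{t_1},\dots,B_{t_n})$ with $\varphi\in C_{b,Lip}(\mathbb R^n)$, I would introduce, on each interval $[t_{k-1},t_k)$, the value function $v_k(t,x;x_1,\dots,x_{k-1})$ of the stochastic control problem obtained by maximizing over $\theta\in\mathcal A$ the conditional expectation of the same terminal/matching data that define the functions $u_k$ in the construction of $\widehat{\mathbb E}$. By the dynamic programming principle, $v_k$ is a bounded Lipschitz viscosity solution of $\partial_t v_k+G(\partial_x^2 v_k)=0$ with the same terminal condition as $u_k$; here one uses the elementary identity $\sup_{a\in[\underline\sigma,\overline\sigma]}\tfrac12 a^2 p=G(p)$. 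The comparison principle for this nondegenerate fully nonlinear parabolic equation forces $v_k=u_k$ for every $k$, and iterating backward from $k=n$ to $k=1$ gives $\widetilde{\mathbb E}[\xi]=v_1(0,0)=u_1(0,0)=\widehat{\mathbb E}[\xi]$. An equivalent route is a verification argument: applying It\^o's formula to $u_k(s,B^\theta_s)$ under each $\P_\theta$ together with $\tfrac12\theta_s^2\partial_x^2 u_k\le G(\partial_x^2 u_k)$ gives $\widetilde{\mathbb E}[\xi]\le\widehat{\mathbb E}[\xi]$, while the reverse inequality follows by feeding in controls $\theta_s$ that approximately maximize $a\mapsto\tfrac12 a^2\partial_x^2 u_k(s,B^\theta_s)$ over $[\underline\sigma,\overline\sigma]$.

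I expect the identification in (ii) to be the main obstacle, since the $u_k$ need not be classically smooth: one must either work consistently within the viscosity-solution framework and invoke the comparison principle for uniformly elliptic fully nonlinear equations, or regularize the $u_k$ and control the resulting error in order to justify the It\^o/verification computation, and in the latter case one additionally needs a measurable-selection and approximation step to produce admissible near-optimal controls lying in $\mathcal A$. Once (ii) is established, the continuity argument in the first paragraph upgrades the identity from $L_{ip}(\Omega_T)$ to all of $L_G^1(\Omega_T)$ with $\mathcal P$ as constructed, completing the proof.
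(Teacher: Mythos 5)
This theorem is quoted from \cite{DHP11}, so the paper offers no proof of its own, and your outline is essentially the argument of that reference: realize $\widehat{\mathbb{E}}$ as the upper expectation over the weak closure of the laws of $\int_0^\cdot\theta_s\,dW_s$ with $\theta$ valued in $[\underline{\sigma},\overline{\sigma}]$, obtain weak compactness from a BDG/Kolmogorov moment bound uniform in $\theta$, and identify the control value function with the solution of $\partial_t u+G(\partial_x^2u)=0$ by dynamic programming and viscosity comparison, then extend by density. The one step you compress is the passage from $L_{ip}(\Omega_T)$ to $L_G^1(\Omega_T)$: to conclude that the extension is still given by the same formula $\sup_{\P\in\mathcal{P}}\E^{\P}[\xi]$ for every $\xi\in L_G^1(\Omega_T)$ (rather than merely that the two functionals admit a common abstract continuous extension), one must identify elements of the completion with quasi-continuous random variables, so that $\E^{\P}[\xi]$ is meaningful, and use the uniform (in $\P$) integrability of $L_G^1$-limits to pass to the limit in $\E^{\P}[\xi_n]$; this identification is a substantive part of \cite{DHP11} that your sketch takes for granted.
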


For $\mathcal{P}$ being a weakly compact set that represents $\widehat{\mathbb{E}}$, we define the following two Choquet capacities:
\[
V(A)=\sup_{\P\in\mathcal{P}}\P(A) \quad\text{and} \quad v(A)=\inf_{\P\in\mathcal{P}}\P(A), \qquad \forall A\in\mathcal{B}(\Omega_T).
\]
A set $A\in\mathcal{B}(\Omega_T)$ is called polar if $V(A)=0$.  A
property holds $``quasi$-$surely"$ (q.s.) if it holds outside a
polar set. In this paper, we do not distinguish two random variables $X$ and $Y$ if $X=Y$, q.s.. The following proposition can be seen as the strict comparison property for $G$-expectation.

\begin{proposition}[\cite{LL}]\label{comparison}
Let $X,Y\in L_G^1(\Omega_T)$ with $X\leq Y$, q.s.. The following properties hold:
\begin{itemize}
\item[(i)] If $v(X<Y)>0$, then $\hE[X]<\hE[Y]$;
\item[(ii)] If $\hE[X]<\hE[Y]$, then $V(X<Y)>0$.
\end{itemize}
\end{proposition}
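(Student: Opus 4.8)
The plan is to prove the two implications essentially by contraposition, using the representation theorem (Theorem \ref{the1.1}) together with the measure-theoretic definitions of the capacities $V$ and $v$. Throughout, recall that $\widehat{\mathbb E}[\cdot]=\sup_{\mathsf P\in\mathcal P}\mathsf E^{\mathsf P}[\cdot]$ and that, for each fixed $\mathsf P\in\mathcal P$, $\mathsf E^{\mathsf P}$ is an ordinary (linear) expectation for which classical strict monotonicity holds: if $X\le Y$ a.s.-$\mathsf P$ and $\mathsf P(X<Y)>0$, then $\mathsf E^{\mathsf P}[X]<\mathsf E^{\mathsf P}[Y]$.

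For part (i), suppose $v(X<Y)>0$, i.e.\ $\inf_{\mathsf P\in\mathcal P}\mathsf P(X<Y)=:\delta>0$. I want to upgrade the classical strict inequality — which holds measure by measure but possibly with a gap shrinking to zero — to a uniform gap. The natural device is to introduce the sets $A_n=\{Y-X>1/n\}$; since $X<Y$ on a set of capacity-infimum at least $\delta$, continuity from below of each $\mathsf P$ gives $\mathsf P(A_n)\uparrow \mathsf P(X<Y)\ge\delta$, so one can fix $n$ large enough that $\inf_{\mathsf P\in\mathcal P}\mathsf P(A_n)\ge\delta/2$ — here I would need to be slightly careful, since the choice of $n$ making $\mathsf P(A_n)$ large may a priori depend on $\mathsf P$; the cleanest fix is to note $\mathsf P(X<Y)-\mathsf P(A_n)=\mathsf P(0<Y-X\le 1/n)$ and to argue that, because $Y-X\in L^1_G$, the quantity $\widehat{\mathbb E}[(Y-X)\mathbbm 1_{\{0<Y-X\le 1/n\}}]\to 0$, which bounds $\sup_{\mathsf P}\mathsf P(0<Y-X\le 1/n)$ after a further truncation — alternatively, replace $A_n$ by $\{Y-X\ge \varepsilon\}$ and use that $v(X<Y)>0$ forces $v(Y-X\ge\varepsilon)>0$ for some $\varepsilon>0$ by a capacity continuity argument. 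Granting a uniform $\varepsilon,\delta>0$ with $\inf_{\mathsf P}\mathsf P(Y-X\ge\varepsilon)\ge\delta$, we get for every $\mathsf P$ that $\mathsf E^{\mathsf P}[Y]-\mathsf E^{\mathsf P}[X]=\mathsf E^{\mathsf P}[Y-X]\ge\varepsilon\delta>0$, i.e.\ $\mathsf E^{\mathsf P}[X]\le\mathsf E^{\mathsf P}[Y]-\varepsilon\delta$. Taking the supremum over $\mathsf P\in\mathcal P$ yields $\widehat{\mathbb E}[X]\le\widehat{\mathbb E}[Y]-\varepsilon\delta<\widehat{\mathbb E}[Y]$, which is the claim.

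For part (ii), I argue by contraposition: assume $V(X<Y)=0$, i.e.\ $X=Y$ q.s., and show $\widehat{\mathbb E}[X]=\widehat{\mathbb E}[Y]$, contradicting $\widehat{\mathbb E}[X]<\widehat{\mathbb E}[Y]$. This direction is the routine one: since the paper has adopted the convention of not distinguishing random variables that agree quasi-surely, $X=Y$ q.s.\ means $X$ and $Y$ are the same element of $L^1_G(\Omega_T)$, so trivially $\widehat{\mathbb E}[X]=\widehat{\mathbb E}[Y]$. Even without invoking the convention directly, one can note $\widehat{\mathbb E}[|X-Y|]\le \widehat{\mathbb E}[(|X|+|Y|)\mathbbm 1_{\{X\ne Y\}}]=0$ because the set $\{X\ne Y\}$ is polar and $|X|+|Y|\in L^1_G$, hence $|\widehat{\mathbb E}[X]-\widehat{\mathbb E}[Y]|\le\widehat{\mathbb E}[|X-Y|]=0$.

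The main obstacle is the uniformity issue in part (i): passing from "$\mathsf E^{\mathsf P}[X]<\mathsf E^{\mathsf P}[Y]$ for every $\mathsf P$" to "$\sup_{\mathsf P}\mathsf E^{\mathsf P}[X]<\sup_{\mathsf P}\mathsf E^{\mathsf P}[Y]$", which is false in general and requires extracting a uniform positive gap from the hypothesis $v(X<Y)>0$. I expect the clean route is to show that $v(X<Y)>0$ implies $v(Y-X\ge\varepsilon)>0$ for some $\varepsilon>0$ — this follows because $\{X<Y\}=\bigcup_n\{Y-X\ge 1/n\}$ is an increasing union, and a weakly compact $\mathcal P$ enjoys enough regularity (Choquet capacity continuity from below along with weak compactness of $\mathcal P$) that $v(\bigcup_n C_n)=\lim_n v(C_n)$ for increasing $C_n$, or at least that the limit is positive whenever the union has positive lower capacity. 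Once that reduction is in hand, the rest is the elementary estimate $\widehat{\mathbb E}[Y]-\widehat{\mathbb E}[X]\ge\widehat{\mathbb E}[Y-X]\ge\varepsilon\, v(Y-X\ge\varepsilon)>0$, where the first inequality is sub-additivity of $\widehat{\mathbb E}$ in the form $\widehat{\mathbb E}[Y]\le\widehat{\mathbb E}[X]+\widehat{\mathbb E}[Y-X]$ and the second is the lower-capacity analogue of Markov's inequality.
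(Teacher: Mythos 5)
The paper does not prove Proposition \ref{comparison}; it is imported from \cite{LL}, so there is no internal proof to compare with and I assess your argument on its own terms. Your part (ii) is correct and complete: $V(X<Y)=0$ together with $X\le Y$ q.s.\ gives $X=Y$ q.s., hence $\E^{\P}[X]=\E^{\P}[Y]$ for every $\P\in\mathcal P$ and therefore $\hE[X]=\hE[Y]$, contradicting the hypothesis.

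Part (i), however, has a genuine gap, and it sits exactly at the step you yourself flag as the main obstacle: passing from $v(X<Y)>0$ to $v(Y-X\ge\varepsilon)>0$ for some $\varepsilon>0$. Neither of your proposed routes closes it. The lower capacity $v$ is \emph{not} continuous from below along increasing Borel sequences, even for a weakly compact family: take $\mathcal P=\{\delta_x:x\in[0,1]\}$ (weakly compact) and $A_n=[0,1-1/n]\cup\{1\}\uparrow[0,1]$; then $v(A_n)=0$ for all $n$ while $v([0,1])=1$. By duality $v(A)=1-V(A^c)$, continuity from below of $v$ is continuity from above of $V$, which weak compactness gives only along \emph{closed} sets; so a correct argument must use the regularity (quasi-continuity) of $Y-X$ as an element of $L_G^1(\Omega_T)$, which your proof never invokes. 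Your alternative fix also fails: $\hE[(Y-X)\mathbbm 1_{\{0<Y-X\le 1/n\}}]\le 1/n$ holds trivially, but since $Y-X$ is small on that set this bound says nothing about $\sup_{\P\in\mathcal P}\P(0<Y-X\le 1/n)$ (Markov's inequality controls $\P(Z\ge\varepsilon)$, not $\P(0<Z\le 1/n)$), and making these probabilities small uniformly in $\P$ is precisely the original uniformity problem. Finally, your closing estimate $\hE[Y]-\hE[X]\ge\hE[Y-X]$ is the reverse of sub-additivity and is false in general; what your measure-by-measure computation actually delivers is $\hE[Y]-\hE[X]\ge-\hE[X-Y]=\inf_{\P\in\mathcal P}\E^{\P}[Y-X]\ge\varepsilon\, v(Y-X\ge\varepsilon)$. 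A standard repair: choose $\P_n\in\mathcal P$ with $\P_n(Y-X\ge 1/n)\to 0$, extract a weak limit $\P^*\in\mathcal P$, and use quasi-continuity of $Y-X$ together with the portmanteau theorem (off a small open exceptional set) to conclude $\P^*(X<Y)=0$, contradicting $v(X<Y)>0$; or, more directly, use that for $\xi\in L_G^1(\Omega_T)$ the map $\P\mapsto\E^{\P}[\xi]$ is weakly continuous on the compact $\mathcal P$, so $\hE[X]=\E^{\P^*}[X]$ for some $\P^*\in\mathcal P$, whence $\hE[Y]\ge\E^{\P^*}[Y]>\E^{\P^*}[X]=\hE[X]$ because $\P^*(X<Y)\ge v(X<Y)>0$. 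Either way, the quasi-continuity of $L_G^1$ elements is the missing ingredient in your write-up.
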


The lemma below will be utilized in constructing the solution in the next section.
\begin{lemma}[\cite{LiuW}]\label{lemma3.5}
Suppose that $X\in L_G^p(\Omega_T)$ with some $p\geq 1$. Then, for any $\varepsilon>0$, there exists a constant $\delta>0$ such that for all set $O\in \mathcal{B}(\Omega_T)$ with $V(O)\leq \delta$, we have
\begin{align*}
\sup_{t\in[0,T]}\hE\Big[\big|\hE_t[X]\big|^p \mathbbm{1}_O\Big]\leq \varepsilon.
\end{align*}
\end{lemma}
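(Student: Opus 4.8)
The plan is to reduce the statement to the case of a bounded random variable by truncation, and then to control the truncation error \emph{uniformly in $t$} by combining Jensen's inequality for the conditional $G$-expectation with its tower property. Throughout, the quantities $\hE[\,\cdot\,\mathbbm{1}_O]$ are read, as is standard, via the upper expectation $\sup_{\P\in\mathcal{P}}\E^{\P}[\,\cdot\,]$ extending $\hE$ (so that in particular $\hE[\mathbbm{1}_O]=V(O)$), since $\mathbbm{1}_O$ need not be quasi-continuous.

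First I would fix $\varepsilon>0$. Since $X\in L_G^p(\Omega_T)$, the characterization of $L_G^p$ due to \cite{DHP11} gives $\hE\big[|X|^p\mathbbm{1}_{\{|X|>N\}}\big]\to 0$ as $N\to\infty$; choose $N$ with $2^{p-1}\hE\big[|X|^p\mathbbm{1}_{\{|X|>N\}}\big]\le \varepsilon/2$, and set $X^N:=(-N)\vee(X\wedge N)$, which is bounded and still lies in $L_G^p(\Omega_T)$ because composition with a bounded Lipschitz function preserves membership. Note that $|X-X^N|\le |X|\mathbbm{1}_{\{|X|>N\}}$ q.s. By sub-additivity and monotonicity of the conditional $G$-expectation, $|\hE_t[X]-\hE_t[X^N]|\le \hE_t[|X-X^N|]$ for every $t$, so with $(a+b)^p\le 2^{p-1}(a^p+b^p)$,
$$|\hE_t[X]|^p \le 2^{p-1}|\hE_t[X^N]|^p + 2^{p-1}\big(\hE_t[|X-X^N|]\big)^p.$$

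For the first term, $|X^N|\le N$ q.s. gives $|\hE_t[X^N]|\le N$, hence $\hE\big[|\hE_t[X^N]|^p\mathbbm{1}_O\big]\le N^p V(O)$. For the second term, bounding $\mathbbm{1}_O\le 1$, then applying Jensen's inequality for the conditional $G$-expectation (the map $x\mapsto|x|^p$ is convex for $p\ge 1$) followed by the tower property $\hE[\hE_t[\,\cdot\,]]=\hE[\,\cdot\,]$,
$$\hE\Big[\big(\hE_t[|X-X^N|]\big)^p\mathbbm{1}_O\Big] \le \hE\big[\hE_t[|X-X^N|^p]\big] = \hE\big[|X-X^N|^p\big] \le \hE\big[|X|^p\mathbbm{1}_{\{|X|>N\}}\big],$$
which does not depend on $t$. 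Combining, for every $t$ and every $O$ with $V(O)\le\delta$,
$$\hE\big[|\hE_t[X]|^p\mathbbm{1}_O\big] \le 2^{p-1}N^p\delta + 2^{p-1}\hE\big[|X|^p\mathbbm{1}_{\{|X|>N\}}\big].$$
Taking $\sup_{t\in[0,T]}$ and then setting $\delta:=\varepsilon/(2^p N^p)$ makes the right-hand side at most $\varepsilon$, which is the claim.

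The computation is routine once the $G$-framework tools are assembled; the point I expect to need the most care is that the truncation error must be bounded \emph{independently of $t$}. This is precisely what conditional Jensen plus the tower property deliver, collapsing $\sup_t\hE\big[(\hE_t[|X-X^N|])^p\big]$ into the single, $t$-free quantity $\hE\big[|X-X^N|^p\big]$; without this collapse one would be left with a $t$-dependent family of error terms and the uniformity in the conclusion would be in doubt.
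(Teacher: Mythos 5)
The paper imports this lemma from \cite{LiuW} without reproducing a proof, so there is no in-text argument to compare against; your proof is correct and is essentially the standard one for such uniform-integrability statements in the $G$-framework. The truncation via the Denis--Hu--Peng characterization of $L_G^p$, the bound $\hE\big[|\hE_t[X^N]|^p\mathbbm{1}_O\big]\le N^pV(O)$ for the bounded part, and the collapse of the $t$-dependence of the error term through conditional Jensen plus the tower property are all sound, and you correctly flag the one genuinely delicate point, namely that $\hE[\,\cdot\,\mathbbm{1}_O]$ must be read as the upper expectation $\sup_{\P\in\mathcal P}\E^{\P}[\,\cdot\,]$ since $\mathbbm{1}_O$ need not be quasi-continuous.
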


The following result can be regarded as the monotone convergence theorem under $G$-expectation.
\begin{lemma}[\cite{DHP11}]\label{lemma2.5}
Suppose $\{X_n\}_{n\in\mathbb{N}}$ and $X$ are $\mathcal{B}(\Omega_T)$-measurable.
\begin{itemize}
\item[(1)] If $X_n\uparrow X$ q.s. and $\E^\P[X_1^-]<\infty$ for all $\P\in\mathcal{P}$, then $\hE[X_n]\uparrow \hE[X]$.
\item[(2)] If $\{X_n\}_{n\in\mathbb{N}}\subset L_G^1(\Omega_T)$ satisfies $X_n\downarrow X$ q.s., then $\hE[X_n]\downarrow \hE[X]$.
\end{itemize}
\end{lemma}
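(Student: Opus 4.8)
The plan is to reduce both statements to the representation $\hE[\cdot]=\sup_{\P\in\mathcal{P}}\E^\P[\cdot]$ of Theorem~\ref{the1.1}: run a classical convergence theorem under each fixed $\P\in\mathcal{P}$, and then commute $\sup_{\P\in\mathcal{P}}$ with the limit in $n$. For~(1), since $X_n\ge X_1$ and $\E^\P[X_1^-]<\infty$, each $\E^\P[X_n]$ is well defined in $(-\infty,+\infty]$; writing $X_n=(X_n-X_1)+X_1$ with $X_n-X_1\uparrow X-X_1\ge0$, the classical monotone convergence theorem gives $\E^\P[X_n]\uparrow\E^\P[X]$ for every $\P$, whence, since the limit of a nondecreasing sequence is its supremum and suprema commute,
\[
\lim_n\hE[X_n]=\sup_n\sup_{\P\in\mathcal{P}}\E^\P[X_n]=\sup_{\P\in\mathcal{P}}\sup_n\E^\P[X_n]=\sup_{\P\in\mathcal{P}}\E^\P[X]=\hE[X].
\]
Part~(1) uses nothing about $\mathcal{P}$ beyond the representation (and, applied to $\inf_{m\ge n}X_m$, it also yields Fatou's inequality $\hE[\liminf_n X_n]\le\liminf_n\hE[X_n]$ when the $X_n$ dominate a common $\P$-integrable lower bound).

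For~(2), monotonicity of $\hE$ gives $\hE[X_n]\downarrow L$ for some $L\ge\hE[X]$, where $\hE[X]:=\sup_{\P\in\mathcal{P}}\E^\P[X]\in[-\infty,\hE[X_1]]$ is read off the representation ($X$ need not lie in $L_G^1(\Omega_T)$, but $X\le X_1$ makes each $\E^\P[X]$ meaningful). The content is the reverse inequality $L\le\hE[X]$, and here weak compactness of $\mathcal{P}$ is indispensable. I would first prove it when $X_n$ is replaced by a nonincreasing sequence $\varphi_n\in C_b(\Omega_T)$ with $\varphi_n\downarrow\varphi$: since $\varphi_n$ is bounded continuous and $\mathcal{P}$ is weakly compact, $\P\mapsto\E^\P[\varphi_n]$ attains its supremum, say $\hE[\varphi_n]=\E^{\P_n}[\varphi_n]$; extracting a weakly convergent subsequence $\P_{n_k}\to\P^*\in\mathcal{P}$, one has for each fixed $m$ and all $n_k\ge m$ that $\E^{\P_{n_k}}[\varphi_{n_k}]\le\E^{\P_{n_k}}[\varphi_m]\to\E^{\P^*}[\varphi_m]$ as $k\to\infty$, and then letting $m\to\infty$ (dominated convergence under the single measure $\P^*$) gives $L=\lim_k\E^{\P_{n_k}}[\varphi_{n_k}]\le\E^{\P^*}[\varphi]\le\hE[\varphi]$; since $L\ge\hE[\varphi]$ is trivial, $\hE[\varphi_n]\downarrow\hE[\varphi]$.

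To transfer this to $X_n\in L_G^1(\Omega_T)$, fix $\varepsilon>0$; using that $L_{ip}(\Omega_T)\subset C_b(\Omega_T)$ is dense in $L_G^1(\Omega_T)$, pick $\psi_n\in L_{ip}(\Omega_T)$ with $\hE[|X_n-\psi_n|]\le\varepsilon2^{-n}$ and set $\widetilde\psi_n:=\psi_1\wedge\cdots\wedge\psi_n\in C_b(\Omega_T)$, which is nonincreasing. Since $X_n=X_1\wedge\cdots\wedge X_n$ and $(y_1,\dots,y_n)\mapsto\min_i y_i$ is $1$-Lipschitz, subadditivity of $\hE$ gives $\hE[|X_n-\widetilde\psi_n|]\le\sum_{i\le n}\hE[|X_i-\psi_i|]\le\varepsilon$, hence $|\hE[X_n]-\hE[\widetilde\psi_n]|\le\varepsilon$ for all $n$. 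Applying the $C_b$-case to $\widetilde\psi_n\downarrow\widetilde\psi:=\inf_n\widetilde\psi_n$ gives $\lim_n\hE[\widetilde\psi_n]=\sup_{\P\in\mathcal{P}}\E^\P[\widetilde\psi]$; moreover $\widetilde\psi_n\le X_n+|X_n-\widetilde\psi_n|$ gives $\E^\P[\widetilde\psi_n]\le\E^\P[X_n]+\varepsilon$ for every $\P$, so letting $n\to\infty$ (classical convergence under $\P$) and taking $\sup_{\P\in\mathcal{P}}$ gives $\sup_{\P\in\mathcal{P}}\E^\P[\widetilde\psi]\le\hE[X]+\varepsilon$. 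Combining, $L=\lim_n\hE[X_n]\le\lim_n\hE[\widetilde\psi_n]+\varepsilon\le\hE[X]+2\varepsilon$, and $\varepsilon\downarrow0$ finishes the proof; note this argument never forms $\widetilde\psi-X$, which neatly avoids the possibility that $X$ or $\widetilde\psi$ equals $-\infty$ on a non-polar set.

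The main obstacle is the $C_b(\Omega_T)$ step: it is precisely there that weak compactness of $\mathcal{P}$ is used — to attain each supremum $\hE[\varphi_n]$ and to extract the limiting measure $\P^*$ — and it is exactly the statement that fails for arbitrary bounded $\mathcal{B}(\Omega_T)$-measurable sequences, which is why $\{X_n\}\subset L_G^1(\Omega_T)$ is required in~(2) though not in~(1). Everything else — classical (monotone or dominated) convergence under fixed measures, density of $L_{ip}(\Omega_T)$ in $L_G^1(\Omega_T)$, and subadditivity and monotonicity of $\hE$ — is routine.
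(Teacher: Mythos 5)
Your proof is correct. Note, however, that the paper does not prove this lemma at all: it is quoted verbatim from \cite{DHP11}, so the comparison is with the original argument there rather than with anything in this paper. Part (1) of your proposal is essentially the canonical proof (classical monotone convergence under each fixed $\P\in\mathcal{P}$, then interchange of $\sup_{\P}$ with the monotone limit), with the implicit but standard convention that for merely $\mathcal{B}(\Omega_T)$-measurable variables $\hE[\cdot]$ means the upper expectation $\sup_{\P\in\mathcal{P}}\E^{\P}[\cdot]$, i.e.\ the extension of Theorem \ref{the1.1} beyond $L_G^1(\Omega_T)$ — without that reading the statement of (1) does not even parse. For part (2) your route differs in an interesting way from \cite{DHP11}: there, the downward theorem is obtained from a separate lemma asserting that $\E^{\P_k}[X]\rightarrow\E^{\P}[X]$ for every $X\in L_G^1(\Omega_T)$ whenever $\P_k\rightarrow\P$ weakly in $\mathcal{P}$, a fact that rests on the quasi-continuity and uniform-integrability characterization of $L_G^1$; the diagonal extraction is then run directly on the $X_n$. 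You instead reduce to a monotone sequence of bounded continuous (in fact $L_{ip}$) functions via the min-trick $\widetilde\psi_n=\psi_1\wedge\cdots\wedge\psi_n$ together with the $\varepsilon2^{-n}$ approximation, and only then use weak compactness of $\mathcal{P}$ (attainment of the supremum and sequential extraction, which uses metrizability of the weak topology on measures over the Polish space $\Omega_T$). This buys you a self-contained argument needing only the representation theorem, density of $L_{ip}(\Omega_T)$ in $L_G^1(\Omega_T)$, and subadditivity, at the cost of the bookkeeping with $\widetilde\psi_n$; your closing observation that one never forms $\widetilde\psi-X$ correctly defuses the only place where $-\infty$ could cause trouble. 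Both proofs isolate the same essential point, namely that weak compactness of $\mathcal{P}$ is what makes the downward half true and why (2) requires $\{X_n\}\subset L_G^1(\Omega_T)$ while (1) does not.
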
 

We need the following norms and spaces to specify the regularity conditions imposed on the parameter functions.
\begin{definition}
	\label{def2.6} Let $M_{G}^{0}(0,T)$ be the collection of processes such that
	\[
	\eta_{t}(\omega)=\sum_{j=0}^{N-1}\xi_{j}(\omega)\mathbbm{1}_{[t_{j},t_{j+1})}(t),
	\]
	where $\xi_{i}\in L_{ip}(\Omega_{t_{i}})$ for a given partition $\{t_{0},\cdot\cdot\cdot,t_{N}\}$ of $[0,T]$. For each
	$p\geq1$ and $\eta\in M_G^0(0,T)$, denote $$\|\eta\|_{H_G^p}=\left\{\widehat{\mathbb{E}}\bigg(\int_0^T|\eta_s|^2ds\bigg)^{p/2}\right\}^{1/p}\quad\text{and}\quad \Vert\eta\Vert_{M_{G}^{p}}=\left\{\widehat{\mathbb{E}}\bigg(\int_{0}^{T}|\eta_{s}|^{p}ds\bigg)\right\}^{1/p}.$$ 
	Let $H_G^p(0,T)$ and $M_{G}^{p}(0,T)$ be the completions
	of $M_{G}^{0}(0,T)$ under the norms $\|\cdot\|_{H_G^p}$ and $\|\cdot\|_{M_G^p}$, respectively.
\end{definition}

Denote by $\langle B\rangle$ the quadratic variation process of $G$-Brownian motion $B$. For two processes $ \xi\in M_{G}^{1}(0,T)$ and $ \eta\in M_{G}^{2}(0,T)$,
$G$-It\^{o} integrals $(\int^{t}_0\xi_sd\langle
B\rangle_s)_{0\leq t\leq T}$ and $(\int^{t}_0\eta_sdB_s)_{0\leq t\leq T}$ are well defined, see  \cite{lp} and \cite{P19}. The subsequent proposition can be interpreted as the Burkholder--Davis--Gundy (BDG) inequality within $G$-expectation framework.
\begin{proposition}[\cite{P19}]\label{BDG}
	If $\eta\in H_G^{\alpha}(0,T)$ with $\alpha\geq 1$ and $p\in(0,\alpha]$, then we have
	\begin{displaymath}
		\underline{\sigma}^p c\widehat{\mathbb{E}}_t\bigg(\int_t^T |\eta_s|^2ds\bigg)^{p/2}\leq
		\widehat{\mathbb{E}}_t\bigg[\sup_{u\in[t,T]}\bigg|\int_t^u\eta_s dB_s\bigg|^p\bigg]\leq
		\bar{\sigma}^p C\widehat{\mathbb{E}}_t\bigg(\int_t^T |\eta_s|^2ds\bigg)^{p/2},
	\end{displaymath}
	where $0<c<C<\infty$ are constants depending on $p$ and $T$.
\end{proposition}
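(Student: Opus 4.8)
The plan is to transfer the classical Burkholder--Davis--Gundy inequality to the $G$-framework, measure by measure, using the representation $\hE[\cdot]=\sup_{\P\in\mathcal P}\E^{\P}[\cdot]$ of Theorem \ref{the1.1} together with the two-sided pathwise bracket bound: since $\langle B\rangle_s-\underline{\sigma}^2s$ and $\bar{\sigma}^2s-\langle B\rangle_s$ are nondecreasing q.s., one has
\[
\underline{\sigma}^2\int_t^T|\eta_s|^2\,ds\ \le\ \int_t^T|\eta_s|^2\,d\langle B\rangle_s\ \le\ \bar{\sigma}^2\int_t^T|\eta_s|^2\,ds,\qquad\text{q.s.}
\]
First I would reduce to an elementary integrand $\eta\in M_G^0(0,T)$ (which is in particular bounded), since $H_G^{\alpha}(0,T)$ is by definition the completion of $M_G^0(0,T)$ under $\|\cdot\|_{H_G^{\alpha}}$ and the general case will follow by approximation at the end.

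Fix such an $\eta$, let $(\mathcal F_t)_{t\in[0,T]}$ be the canonical filtration of $B$, and set $M_u=\int_t^u\eta_s\,dB_s$ for $u\in[t,T]$. The key observation is that for every $\P\in\mathcal P$ the canonical process $B$ is a continuous square-integrable $\P$-martingale whose predictable quadratic variation coincides $\P$-a.s. with the q.s.-defined process $\langle B\rangle$; consequently, under $\P$ the $G$-It\^{o} integral $M$ agrees with the classical It\^{o} integral, is a square-integrable $\P$-martingale on $[t,T]$, and has $\langle M\rangle^{\P}_u=\int_t^u|\eta_s|^2\,d\langle B\rangle_s$. Applying the classical conditional Burkholder--Davis--Gundy inequality under $\P$ — which holds for every $p>0$ — bounds $\E^{\P}[\sup_{u\in[t,T]}|M_u|^p\,|\,\mathcal F_t]$ above and below, up to constants $0<c<C<\infty$ depending only on $p$, by $\E^{\P}[(\int_t^T|\eta_s|^2\,d\langle B\rangle_s)^{p/2}\,|\,\mathcal F_t]$; combining this with the bracket bound raised to the power $p/2$ (using its left half for the lower estimate and its right half for the upper one) then yields the two inequalities with $\int_t^T|\eta_s|^2\,d\langle B\rangle_s$ replaced by $\underline{\sigma}^2\int_t^T|\eta_s|^2\,ds$ and $\bar{\sigma}^2\int_t^T|\eta_s|^2\,ds$, respectively, each holding $\P$-a.s.

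Next I would pass to the conditional $G$-expectation. Invoking the representation $\hE_t[\xi]=\esssup_{\P'\in\mathcal P(t,\P)}\E^{\P'}[\xi\,|\,\mathcal F_t]$ $\P$-a.s., where $\mathcal P(t,\P)=\{\P'\in\mathcal P:\P'=\P\text{ on }\mathcal F_t\}$ (equivalently, using that a conditional $G$-expectation is again a $G$-expectation on the shifted canonical space), and noting that the inequalities just obtained hold $\P'$-a.s. for each such $\P'$ while $\mathcal P(t,\P)$ again represents $\hE_t$, I would take the essential supremum over $\P'$ on each side to arrive at
\[
\underline{\sigma}^p c\,\hE_t\big(\int_t^T|\eta_s|^2\,ds\big)^{p/2}\ \le\ \hE_t\big[\sup_{u\in[t,T]}\big|\int_t^u\eta_s\,dB_s\big|^p\big]\ \le\ \bar{\sigma}^p C\,\hE_t\big(\int_t^T|\eta_s|^2\,ds\big)^{p/2}
\]
$\P$-a.s. for every $\P\in\mathcal P$, hence q.s. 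For a general $\eta\in H_G^{\alpha}(0,T)$, I would pick elementary $\eta^n\to\eta$ in $\|\cdot\|_{H_G^{\alpha}}$; applying the already-proved upper bound to $\eta^n-\eta^m$ shows that $\int_t^{\cdot}\eta^n_s\,dB_s$ is Cauchy in the relevant norm and identifies its limit with $\int_t^{\cdot}\eta_s\,dB_s$, after which I pass to the limit in the two inequalities using the continuity of $\hE_t$ on $L_G^1(\Omega_T)$, the monotone convergence property of Lemma \ref{lemma2.5}, and a Fatou-type estimate along a q.s.-convergent subsequence.

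The main obstacle I expect to be the transfer step: the classical BDG inequality and the bracket bound are routine, whereas justifying the essential-supremum representation of $\hE_t$ and the interchange of that essential supremum with both sides of the per-measure estimate is delicate, precisely because $\mathcal P$ is not dominated — there is no common reference measure, so one must argue $\P$ by $\P$ and invoke a pasting/measurable-selection property of $\mathcal P$. The more ``internal'' alternative — applying the $G$-It\^{o} formula to $|M_u|^p$ together with the $G$-Doob maximal inequality — covers only $p\ge 2$, where $x\mapsto|x|^p$ is $C^2$; handling the stated range $p\in(0,\alpha]$, in particular $p<1$, is exactly what forces the representation-based argument (or, alternatively, a Lenglart-type domination argument carried out under each $\P\in\mathcal P$).
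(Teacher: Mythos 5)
This proposition is not proved in the paper at all: it is quoted verbatim from \cite{P19} as a known fact of $G$-stochastic calculus, so there is no in-paper argument to compare against. Taken on its own terms, your sketch follows what is essentially the standard route in the literature (reduce to elementary integrands, work measure-by-measure under each $\P\in\mathcal P$ via Theorem \ref{the1.1}, use that under each $\P$ the canonical process is a continuous martingale whose bracket has density in $[\underline{\sigma}^2,\bar{\sigma}^2]$, apply the classical conditional BDG inequality valid for all $p>0$, and then pass to the conditional $G$-expectation through the pasting representation $\hE_t[\xi]=\esssup_{\P'\in\mathcal P(t,\P)}\E^{\P'}[\xi\,|\,\mathcal F_t]$), and the logic of taking the essential supremum on both sides of the two-sided estimate is sound: for the upper bound you dominate each $\E^{\P'}[\,\cdot\,|\,\mathcal F_t]$ by $C\hE_t[(\int_t^T|\eta_s|^2ds)^{p/2}]$ before taking the supremum, and for the lower bound you dominate each $c\,\E^{\P'}[(\int_t^T|\eta_s|^2ds)^{p/2}\,|\,\mathcal F_t]$ by the fixed quantity $\hE_t[\sup_u|M_u|^p]$. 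Two points deserve to be made explicit rather than left implicit: first, the pasting/esssup representation of $\hE_t$ (Soner--Touzi--Zhang, Hu--Peng type results) applies to random variables in $L_G^1(\Omega_T)$, so you must check that $\sup_{u\in[t,T]}|\int_t^u\eta_s dB_s|^p$ and $(\int_t^T|\eta_s|^2ds)^{p/2}$ are indeed in $L_G^1$ — for elementary $\eta$ this follows from the quasi-continuity of the integral process and its running supremum, but it is not automatic and is precisely why such statements are usually established first on $M_G^0$; second, in the final approximation step the convergence of the suprema and of the $p$-th powers (use $||x|^p-|y|^p|\le|x-y|^p$ for $p\le1$) and the extraction of a q.s.-convergent subsequence of $\hE_t[\cdot]$ should be spelled out, as you indicate. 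With those caveats your argument is a correct reconstruction of the cited result; given that the paper simply invokes \cite{P19}, the cleaner course in this context would be to cite the reference rather than reprove it.
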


Let $$S_G^0(0,T)=\Big\{h(t,B_{t_1\wedge t}, \ldots,B_{t_n\wedge t}):t_1,\ldots,t_n\in[0,T],\; h\in C_{b,Lip}(\mathbb{R}^{n+1})\Big\}.$$ For $p\geq 1$ and $\eta\in S_G^0(0,T)$, set $$\|\eta\|_{S_G^p}=\Bigg\{\widehat{\mathbb{E}}\sup_{t\in[0,T]}|\eta_t|^p\Bigg\}^{1/p}.$$ Denote by $S_G^p(0,T)$ the completion of $S_G^0(0,T)$ under the norm $\|\cdot\|_{S_G^p}$. \cite{LPS} proved the following uniform continuity property for the processes in $S_G^p(0,T)$. 
\begin{proposition}[\cite{LPS}]\label{the3.7}
	For $Y\in S_G^p(0,T)$ with $p\geq 1$, we have, by setting $Y_s=Y_T$ for $s>T$,
\begin{align*}
	\limsup_{\varepsilon \rightarrow 0} \hE\left[\sup_{t\in[0,T]}\sup_{s\in[t,t+\varepsilon]}|Y_t-Y_s|^p\right]=0.
\end{align*}
\end{proposition}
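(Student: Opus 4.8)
The plan is to prove the estimate first for the generating processes $Y\in S_G^0(0,T)$, where the Lipschitz structure is available explicitly, and then to transfer it to an arbitrary $Y\in S_G^p(0,T)$ by density. Throughout we keep the convention $Y_s=Y_T$ (and $B_s=B_T$) for $s>T$.

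\textbf{Step 1 (generators).} Suppose $Y_t=h(t,B_{t_1\wedge t},\ldots,B_{t_n\wedge t})$ with $h\in C_{b,Lip}(\mathbb{R}^{n+1})$ of Lipschitz constant $L$. Since $|(t_i\wedge t)-(t_i\wedge s)|\le|t-s|$, for $t\in[0,T]$ and $s\in[t,t+\varepsilon]$ we get
\[
|Y_t-Y_s|\le L\Big(\varepsilon+\sum_{i=1}^n|B_{t_i\wedge t}-B_{t_i\wedge s}|\Big)\le L\,(\varepsilon+n\,\omega_\varepsilon),\qquad \omega_\varepsilon:=\sup_{u,v\in[0,T],\,|u-v|\le\varepsilon}|B_u-B_v|,
\]
so that $\hE\big[\sup_{t\in[0,T]}\sup_{s\in[t,t+\varepsilon]}|Y_t-Y_s|^p\big]\le 2^{p-1}L^p\big(\varepsilon^p+n^p\,\hE[\omega_\varepsilon^p]\big)$. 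It thus suffices to prove $\hE[\omega_\varepsilon^p]\to 0$ as $\varepsilon\to 0$. The functional $\omega\mapsto\omega_\varepsilon$ is $2$-Lipschitz for the sup-norm on $\Omega_T$, hence quasi-continuous, and $0\le\omega_\varepsilon^p\le(2\sup_{t\in[0,T]}|B_t|)^p\in L_G^1(\Omega_T)$, so $\omega_\varepsilon^p\in L_G^1(\Omega_T)$. As every $\omega\in\Omega_T$ is uniformly continuous on $[0,T]$, $\omega_\varepsilon^p\downarrow 0$ q.s.\ as $\varepsilon\downarrow 0$; the downward monotone convergence theorem under $G$-expectation, Lemma \ref{lemma2.5}(2), applied along $\varepsilon=1/m$ gives $\hE[\omega_{1/m}^p]\downarrow 0$, and monotonicity of $\varepsilon\mapsto\omega_\varepsilon$ upgrades this to $\hE[\omega_\varepsilon^p]\to 0$. (Alternatively, using the representation $\hE[\cdot]=\sup_{\P\in\mathcal P}\E^\P[\cdot]$ of Theorem \ref{the1.1} with the $\P$-uniform moment bound $\E^\P[|B_u-B_v|^{2k}]\le C|u-v|^k$, a Kolmogorov--Chentsov / Garsia--Rodemich--Rumsey argument yields a modulus $\phi(\varepsilon)\to 0$, independent of $\P$, with $\E^\P[\omega_\varepsilon^p]\le\phi(\varepsilon)$, hence $\hE[\omega_\varepsilon^p]\le\phi(\varepsilon)$.)

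\textbf{Step 2 (density).} Given $Y\in S_G^p(0,T)$, choose $Y^N\in S_G^0(0,T)$ with $\|Y-Y^N\|_{S_G^p}\to 0$. For $p\ge 1$, convexity gives
\[
\sup_{t\in[0,T]}\sup_{s\in[t,t+\varepsilon]}|Y_t-Y_s|^p\le 3^{p-1}\Big(\sup_{t\in[0,T]}\sup_{s\in[t,t+\varepsilon]}|Y^N_t-Y^N_s|^p+2\sup_{r\in[0,T]}|Y_r-Y^N_r|^p\Big),
\]
where the two ``endpoint'' terms were bounded, using the convention for $s>T$, by $\sup_{r\in[0,T]}|Y_r-Y^N_r|^p$. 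Taking $\hE$, then $\limsup_{\varepsilon\to 0}$, and invoking Step 1 for $Y^N$,
\[
\limsup_{\varepsilon\to 0}\hE\Big[\sup_{t\in[0,T]}\sup_{s\in[t,t+\varepsilon]}|Y_t-Y_s|^p\Big]\le 2\cdot 3^{p-1}\,\|Y-Y^N\|_{S_G^p}^p,
\]
and letting $N\to\infty$ yields the claim.

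\textbf{Main obstacle.} The one genuinely delicate point is the uniform-in-$t$ modulus-of-continuity estimate for $G$-Brownian motion in Step 1, namely $\hE[\omega_\varepsilon^p]\to 0$: one must check that $\omega_\varepsilon^p$ really belongs to $L_G^1(\Omega_T)$ — equivalently, that passing to the supremum over the non-dominated family $\mathcal P$ representing $\hE$ does not destroy the pathwise convergence $\omega_\varepsilon\downarrow 0$ — before either the $G$-monotone convergence theorem or the $\P$-uniform Kolmogorov--Chentsov bound can be applied. Once that modulus estimate is secured, the Lipschitz bound of Step 1 and the routine $3\varepsilon$-approximation of Step 2 are straightforward.
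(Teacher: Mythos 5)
Your proof is correct; note that the paper itself gives no proof of this proposition, simply quoting it from \cite{LPS}, and your argument (reduce to generators in $S_G^0(0,T)$ via the Lipschitz structure, control everything by the modulus of continuity $\omega_\varepsilon$ of $G$-Brownian motion using quasi-continuity plus the downward monotone convergence theorem, then pass to general $Y\in S_G^p(0,T)$ by density) is essentially the standard argument used in that reference. The one point you rightly flag — that $\omega_\varepsilon^p\in L_G^1(\Omega_T)$, which follows from its continuity in the sup-norm together with domination by $(2\sup_{t\in[0,T]}|B_t|)^p$ and the Denis--Hu--Peng characterization of $L_G^1$ — is handled adequately.
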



\section{The Skorokhod problem with mean reflection}
\label{sec:Skorokhod}
	In this section, we study the Skorokhod problem with mean reflection under $G$-expectation. In Subsection \ref{sec:definition_example}, we rigorously define this problem in Definition \ref{SPwithMR}, present our primary result in Theorem \ref{SP}, and then illustrate with a concrete example in financial mathematics. In Subsection \ref{sec:first}, we give the proof of Theorem \ref{SP}. An alternative way to construct the solution is provided in Subsection \ref{sec:second}. However, this approach necessitates an additional assumption ($H'_l$) to attain Theorem \ref{SP}, thus designated as Theorem \ref{SP2}. The benefit of this alternative lies in its establishment of a connection between the Skorokhod problem with mean reflection and the deterministic Skorokhod problem. Both methods of construction yield intermediary results vital for establishing the well-posedness of the mean reflected $G$-SDE \eqref{MRGSDE} in Section \ref{sec:MRGSDEs}.

\subsection{Definition and an illustration}
\label{sec:definition_example}	
	The assumptions below encapsulate the properties of the running loss function $l$ and the original process 
$S$ under consideration. 
\begin{itemize}
	\item[($H_l$)] The function $l: \Omega_T \times [0,T] \times \mathbb{R} \rightarrow \mathbb{R}$ satisfies the following conditions:
	\begin{enumerate}[{(1)}]
		\item $l(t,x)$ is uniformly continuous with respect to $t$ and $x$, uniformly in $\omega$.
		\item For any $t \in [0,T]$, $l(t,x)$ is strictly increasing in $x$, q.s..
		\item For any $(t,x) \in [0,T] \times \mathbb{R}$, $l(t,x)\in L_G^1(\Omega_T)$  and $\widehat{\mathbb{E}}[\lim_{x\uparrow \infty}l(t,x)]>0$.
		\item For any $(t,x) \in [0,T] \times \mathbb{R}$, $|l(t,x)|\leq \kappa(1+|x|)$ for some $\kappa>0$, q.s..
	\end{enumerate} 
	\item[($H_S$)] There exists some $p\geq1$ such that $S\in S_G^p(0,T)$ and $\widehat{\mathbb{E}}[l(0,S_0)]\geq 0$.
\end{itemize}
For  $C[0,T]$ being the set of all real-valued deterministic continuous functions on $[0,T]$,  define $I[0,T]$ as the subset of $C[0,T]$ consisting of non-decreasing functions with initial value $0$. We now proceed to provide the definition of the solution.
	\begin{definition}\label{SPwithMR}
Considering $(l,S)$ satisfying ($H_l$) and ($H_S$), we define a pair of processes $(X,A) \in S_G^p(0,T) \times I[0,T]$ as a solution to the Skorokhod problem with mean reflection associated with $(l,S)$, denoted as $\mathbb{SP}(l,S)$, if for $t\in[0,T]$,
    \begin{itemize}
    	\item[(a)] $X_t=S_t+A_t$, 
    	\item[(b)] $\widehat{\mathbb{E}}[l(t,X_t)]\geq 0$,
    	\item[(c)] $\int_0^T \widehat{\mathbb{E}}[l(t,X_t)]dA_t=0$.
    \end{itemize} 
	\end{definition}
Now, we present the main result of this section whose proof is provided in Subsection \ref{sec:first}.
   \begin{theorem}\label{SP}
   	Under Assumptions ($H_l$) and ($H_S$), there exists a unique solution $(X,A)\in S_G^p(0,T)\times I[0,T]$ to the Skorokhod problem $\mathbb{SP}(l,S)$.
   \end{theorem}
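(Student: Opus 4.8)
The plan is to characterize the compensating term $A$ explicitly through a deterministic "running maximum" formula and then verify the three conditions of Definition \ref{SPwithMR}. For each $t$, define $H(t,a) = \widehat{\mathbb{E}}[l(t, S_t + a)]$ for $a \geq 0$. The idea is that $H(t,\cdot)$ is continuous and strictly increasing in $a$ (by ($H_l$)(2) together with the strict comparison property, Proposition \ref{comparison}(i), since adding a positive constant strictly increases $l(t,\cdot)$ quasi-surely), and $H(t,a) \to \widehat{\mathbb{E}}[\lim_{x\uparrow\infty} l(t,x)] > 0$ as $a \to \infty$ by ($H_l$)(3) and the monotone convergence theorem under $G$-expectation (Lemma \ref{lemma2.5}(1)); the integrability needed there is supplied by ($H_l$)(4) together with $S \in S_G^p$. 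Hence for each $t$ there is a unique minimal value $a_t \geq 0$ with $H(t, a_t) \geq 0$, namely $a_t = \inf\{a \geq 0 : \widehat{\mathbb{E}}[l(t, S_t + a)] \geq 0\}$, which equals $0$ precisely when $\widehat{\mathbb{E}}[l(t, S_t)] \geq 0$. I would then set
\[
A_t = \sup_{s \in [0,t]} a_s = \sup_{s \in [0,t]} \inf\{a \geq 0 : \widehat{\mathbb{E}}[l(s, S_s + a)] \geq 0\}, \qquad X_t = S_t + A_t,
\]
with $A_0 = 0$ guaranteed by ($H_S$). This is the natural analogue of the construction in \cite{Briand2020Particles}, adapted to the $G$-expectation setting.

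The verification then proceeds as follows. First, continuity of $t \mapsto A_t$: I would show $t \mapsto a_t$ is continuous, using that $(t,x) \mapsl(t,x)$ is uniformly continuous uniformly in $\omega$ (($H_l$)(1)), that $S \in S_G^p$ so $t\mapsto S_t$ has quasi-sure continuity and the uniform-continuity-in-$S_G^p$ property (Proposition \ref{the3.7}), and that $\widehat{\mathbb{E}}$ is a contraction with respect to $\|\cdot\|_{L^1_G}$; combined with a lower bound on the slope of $H(t,\cdot)$ near $a_t$ (quantifying strict monotonicity) this gives continuity of $a_t$, and a supremum of continuous functions that is finite is continuous, so $A \in I[0,T]$. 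Then $X = S + A \in S_G^p(0,T)$ since $A$ is a bounded deterministic continuous function. Condition (a) holds by construction. Condition (b): $\widehat{\mathbb{E}}[l(t,X_t)] = H(t, A_t) \geq H(t, a_t) \geq 0$ because $A_t \geq a_t$ and $H(t,\cdot)$ is increasing. Condition (c): on any interval where $A$ strictly increases, $A_t = a_t$ and $a_t > 0$ forces $H(t,a_t) = 0$ (by minimality and continuity), so $\widehat{\mathbb{E}}[l(t,X_t)] = 0$ there; more carefully, $dA$ is carried by the set $\{t : A_t = a_t\}$, and on the part of that set where $A_t > 0$ one has $H(t, A_t) = H(t,a_t) = 0$, while the part where $A_t = 0$ contributes nothing to $\int_0^T \widehat{\mathbb{E}}[l(t,X_t)]\, dA_t$ beyond $t=0$ where the integrand vanishes against $dA$ anyway. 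Hence $\int_0^T \widehat{\mathbb{E}}[l(t,X_t)]\,dA_t = 0$.

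For uniqueness, suppose $(X, A)$ and $(X', A')$ are two solutions with corresponding $B_t := \widehat{\mathbb{E}}[l(t, X_t)]$, $B'_t := \widehat{\mathbb{E}}[l(t,X'_t)]$. I would argue that any solution must satisfy $A_t = \sup_{s\leq t} a_s$: condition (b) gives $A_t \geq a_t$ pointwise (again by monotonicity of $H(t,\cdot)$, since $H(t,A_t)\geq 0$ and $a_t$ is the smallest such argument), so $A_t \geq \sup_{s\leq t} a_s$ using that $A$ is nondecreasing; for the reverse inequality, if $A_{t_0} > \sup_{s \leq t_0} a_s$ at some $t_0$, then by continuity $A$ is strictly increasing on some subinterval $(t_1, t_0]$ on which $A_t > a_t$, hence $H(t, A_t) > H(t, a_t) \geq 0$ strictly — here I again invoke Proposition \ref{comparison}(i) to get the strict inequality — so $\widehat{\mathbb{E}}[l(t,X_t)] > 0$ on that subinterval while $dA_t > 0$ there, contradicting condition (c). Therefore $A = A'$ and consequently $X = X'$, q.s.

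\textbf{Main obstacle.} The delicate point is the strict monotonicity and continuity of $a \mapsto H(t,a) = \widehat{\mathbb{E}}[l(t, S_t + a)]$ and the uniform (in $t$) control of its modulus of continuity and of how fast it crosses zero. Pointwise strict monotonicity follows cleanly from Proposition \ref{comparison}(i), but to conclude continuity of $t \mapsto a_t$ — and later, in the SDE application, Lipschitz-type estimates — one needs this strictness to be quantitative and uniform in $t$ and $\omega$; extracting such a uniform lower bound on the increments of $H(t,\cdot)$ from the bare hypotheses ($H_l$)(1)–(4) is the technical heart of the argument, and is presumably where the second construction (under the extra assumption ($H'_l$)) and the link to the deterministic Skorokhod problem become useful. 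The remaining ingredients — monotone convergence (Lemma \ref{lemma2.5}), continuity of $S$ in $S_G^p$ (Proposition \ref{the3.7}), and the contraction property of $\widehat{\mathbb{E}}$ — are standard $G$-framework tools and should fit together routinely once the monotonicity/continuity of $H$ is in hand.
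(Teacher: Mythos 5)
Your construction is, in substance, the paper's first construction: your $a_t$ is exactly the operator $L_t(S_t)$ of \eqref{construction}, you set $A_t=\sup_{s\le t}a_s$ and $X=S+A$, and your verification of (a)--(c) and your uniqueness argument (flat-off condition plus strict comparison) match the paper's proof of Theorem \ref{SP}. The genuine gap is the continuity of $t\mapsto a_t$: you leave it unproved and, in your ``main obstacle'' paragraph, misdiagnose it as requiring a quantitative, uniform-in-$(t,\omega)$ lower bound on the increments of $H(t,\cdot)$ --- essentially the bi-Lipschitz condition ($H'_l$), which Theorem \ref{SP} deliberately does not assume. Without that step you have not shown $A\in I[0,T]$ (i.e.\ that $A$ is continuous), so the existence half is incomplete as written.

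The point you are missing is that no quantitative bound is needed; a soft, pointwise argument suffices, and this is exactly how the paper proceeds. Fix $t$. If $\hE[l(t,S_t)]>0$, then $a_t=0$ and, by continuity of $s\mapsto\hE[l(s,S_s)]$ (Proposition \ref{prop3.3'}, whose proof uses the linear growth ($H_l$)(4) together with Lemma \ref{lemma3.5} and Proposition \ref{the3.7}, not merely the contraction property of $\hE$), we get $a_s=0$ for $s$ near $t$. If $\hE[l(t,S_t)]\le 0$, then $\hE[l(t,a_t+S_t)]=0$, and for any fixed $\varepsilon>0$ strict monotonicity of $x\mapsto\hE[l(t,x+S_t)]$ (Proposition \ref{prop3.3}(ii), which rests on Proposition \ref{comparison}(i) since $l(t,\cdot)$ is strictly increasing q.s.) gives the two strict inequalities $\hE[l(t,a_t-\varepsilon+S_t)]<0<\hE[l(t,a_t+\varepsilon+S_t)]$; because for the two \emph{fixed} constants $c=a_t\pm\varepsilon$ the maps $s\mapsto\hE[l(s,c+S_s)]$ are continuous (Propositions \ref{prop3.3} and \ref{prop3.3'}), these strict inequalities persist for $|s-t|$ small, which forces $|a_s-a_t|\le\varepsilon$. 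This yields continuity of $a$ at every $t$ under ($H_l$) and ($H_S$) alone, after which your verification of (a)--(c) goes through verbatim. Your uniqueness step also needs a small repair: from $A_{t_0}>\sup_{s\le t_0}a_s$ you cannot conclude that $A$ is \emph{strictly increasing} on a subinterval; rather, with $t_1=\sup\{s\le t_0:A_s\le\sup_{u\le t_0}a_u\}$ you get $A_{t_1}<A_{t_0}$, so $dA$ puts positive mass on $(t_1,t_0]$ while $\hE[l(s,X_s)]>0$ there (again by Proposition \ref{comparison}), which already contradicts condition (c); this is a minor rewording and is essentially the paper's own uniqueness argument.
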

We illustrate the solution to the Skorokhod problem $\mathbb{SP}(l,S)$ with a concrete example below.
\begin{example}
	\label{example1}
	Let $l$ be a function satisfying $(H_l)$.  For a fixed $t\in[0,T]$, we define a map $\rho_t:L_G^1(\Omega_t)\rightarrow \mathbb{R}$ as 
	\begin{align*}
		\rho_t(X)=\inf\Big\{x\in\mathbb{R}:\widehat{\mathbb{E}}[l(t,x+X)]\geq 0\Big\}.
	\end{align*}
	It is easy to check that $\rho_t$ is nonincreasing and translation invariant. That is 
	\begin{itemize}
		\item If $X,Y\in L_G^1(\Omega_t)$ with $X\leq Y$, then $\rho_t(X)\geq \rho_t(Y)$;
		\item $\rho_t(X+m)=\rho_t(X)-m$, for $X\in L_G^1(\Omega_t)$ and $m\in\mathbb{R}$.
	\end{itemize}
	Therefore, $\rho_t$ can be regarded as a static risk measure. The risk position $X$ is called acceptable at time $t$ if $\rho_t(X)\leq 0$.  In fact, suppose that $\rho_t(X)> 0$, the value $\rho_t(X)$ can be regarded as the amount of money to be added by an agent in order to make the risk position $X$ acceptable at time $t$.  The readers may refer to \cite{ADEH} for the background of risk measures.
	
	Consider an agent  who wants to hold a stock evolving according to 
	\begin{align*}
		S_t=S_0+ \int_0^t \mu (S_s) ds+\int_0^t \sigma(S_s) dB_s,
	\end{align*}
	where $\mu,\sigma:\mathbb{R}\rightarrow\mathbb{R}$ are Lipschitz functions. 
	Given the dynamic risk measure $\{\rho_t\}_{t\in[0,T]}$, one can ask how to make sure that the risk position $S_t$ remains acceptable at each time $t$. To meet this constraint, the agent needs to inject additional cash. We denote $A_t$ as the cumulative amount of cash required to be injected and $X_t$ as the associated value process. Then, we have
	\begin{align*}
		X_t=S_0+ \int_0^t \mu (S_s) ds+\int_0^t \sigma(S_s) dB_s+A_t.
	\end{align*}
	Clearly, the agent would like to manage the risk in a minimal way, which leads to the condition
	\begin{align*}
		\int_0^T \widehat{\mathbb{E}}[l(t,X_t)]dA_t=0.
	\end{align*}
	That is, $(X,A)$ is the solution of a Skorokhod problem  $\mathbb{SP}(l,S)$.
\end{example}

\subsection{First construction of solutions}
\label{sec:first}
 The first method to establish the existence of the Skorokhod problem $\mathbb{SP}(l,S)$ relies on the following two propositions.
   \begin{proposition}[\cite{LiuW}]\label{prop3.3}
   	Let ($H_l$) hold and $X\in L_G^1(\Omega_T)$. Then
   	\begin{itemize}
   		\item[(i)] for each $(t,x)\in[0,T]\times \mathbb{R}$, $l(t,x+X)\in L_G^1(\Omega_T)$,
   		\item[(ii)] the map $x\rightarrow l(t,x+X)$ is continuous under the norm $\|\cdot\|_{L_G^1}$; in particular, $x\rightarrow \widehat{\mathbb{E}}[l(t,x+X)]$ is continuous and strictly increasing.
   	\end{itemize}
   \end{proposition}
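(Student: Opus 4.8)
Fix $t\in[0,T]$ throughout. The whole argument rests on a single quantitative consequence of $(H_l)(1)$: there is a non-decreasing, subadditive modulus $\varpi$ with $\varpi(0+)=0$ such that, q.s., $|l(t,u)-l(t,v)|\le\varpi(|u-v|)$ for all $u,v\in\mathbb R$. Subadditivity forces $\varpi(r)\le\varpi(1)(1+r)$ for $r\ge 0$, and splitting on $\{|U-V|\le\delta\}$ and bounding $\mathbbm{1}_{\{|U-V|>\delta\}}\le|U-V|/\delta$ on the complement gives, for all $U,V\in L_G^1(\Omega_T)$ and all $\delta>0$,
\begin{equation*}
|l(t,U)-l(t,V)|\le\varpi(\delta)+\varpi(1)\Big(1+\tfrac1\delta\Big)|U-V|\quad\text{q.s.},
\end{equation*}
whence, by sublinearity of $\widehat{\mathbb E}$,
\begin{equation*}
\widehat{\mathbb E}\big[|l(t,U)-l(t,V)|\big]\le\varpi(\delta)+\varpi(1)\Big(1+\tfrac1\delta\Big)\widehat{\mathbb E}\big[|U-V|\big]
\end{equation*}
(all quantities make sense since $\varpi(|U-V|)$ is quasi-continuous and dominated by $\varpi(1)(1+|U-V|)\in L_G^1(\Omega_T)$). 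This is the workhorse: once $Y\mapsto l(t,Y)$ is known to take values in $L_G^1(\Omega_T)$, it is a uniformly continuous self-map of $L_G^1(\Omega_T)$; the ``in particular'' clauses of (ii) then follow at once, since $\widehat{\mathbb E}[\,\cdot\,]$ is $1$-Lipschitz for $\|\cdot\|_{L_G^1}$ and, taking $U=x+X$, $V=x'+X$, the bound reads $\widehat{\mathbb E}[|l(t,x+X)-l(t,x'+X)|]\le\varpi(|x-x'|)\to 0$.

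For the membership statement (i) I proceed in two steps. First, for $\xi\in L_{ip}(\Omega_T)$: $\xi$ is bounded, so $x+\xi$ has range in some compact $[a,b]$, and I interpolate $y\mapsto l(t,y,\omega)$ linearly at the dyadic nodes $y_j^m$ of $[a,b]$, obtaining $\widetilde l_m(t,x+\xi)=\sum_j g_j^m(x+\xi)\,l(t,y_j^m)$ with bounded Lipschitz hat functions $g_j^m$. Each summand is a product of a bounded element of $L_{ip}(\Omega_T)$ with the $L_G^1(\Omega_T)$-element $l(t,y_j^m)$ (available from $(H_l)(3)$), hence lies in $L_G^1(\Omega_T)$; moreover $\|\widetilde l_m(t,x+\xi)-l(t,x+\xi)\|_{L_G^1}\le\varpi((b-a)2^{-m})\to0$, so $l(t,x+\xi)\in L_G^1(\Omega_T)$. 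Second, for general $X\in L_G^1(\Omega_T)$, choose $\xi_n\in L_{ip}(\Omega_T)$ with $\xi_n\to X$ in $L_G^1$; the workhorse estimate shows $(l(t,x+\xi_n))_n$ is Cauchy in $L_G^1(\Omega_T)$, hence converges to some $Z\in L_G^1(\Omega_T)$. Passing to a subsequence along which $\xi_n\to X$ q.s. (a Borel--Cantelli argument for the countably subadditive capacity $V$, via Markov's inequality under each $\mathsf P\in\mathcal P$) and using continuity of $y\mapsto l(t,y,\cdot)$ identifies $Z=l(t,x+X)$ q.s., which proves (i).

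It remains to prove strict monotonicity in (ii). For $x<x'$, assumption $(H_l)(2)$ provides a polar set off which $y\mapsto l(t,y,\omega)$ is strictly increasing, so $l(t,x+X)<l(t,x'+X)$ q.s.; this event has full capacity, hence $v$-capacity $1>0$, and Proposition~\ref{comparison}(i) applied to the pair $\big(l(t,x+X),\,l(t,x'+X)\big)$ yields $\widehat{\mathbb E}[l(t,x+X)]<\widehat{\mathbb E}[l(t,x'+X)]$.

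The main obstacle is step (i): because $l$ is only assumed uniformly continuous and not Lipschitz, one cannot directly compose it with an $L_{ip}$-generator, and $G$-expectation has no dominated convergence theorem in full generality. The piecewise-linear (equivalently, Moreau--Yosida) approximation of $l$ in the space variable, combined with the capacitary Borel--Cantelli step needed to pin the $L_G^1$-limit down to the correct quasi-sure value, is the delicate part; everything else is bookkeeping around the single modulus-of-continuity estimate.
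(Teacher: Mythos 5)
Your proof is correct, but note that there is nothing in this paper to compare it against: Proposition \ref{prop3.3} is imported from \cite{LiuW} and stated without proof. On its own merits your argument works. The subadditive modulus bound $\varpi(r)\le \varpi(1)(1+r)$ and the split on $\{|U-V|\le\delta\}$ give the key estimate $\hE|l(t,U)-l(t,V)|\le \varpi(\delta)+\varpi(1)(1+\delta^{-1})\hE|U-V|$, which settles (ii) once (i) is in hand (with $U=x+X$, $V=x'+X$ the bound is simply $\varpi(|x-x'|)$); membership (i) is obtained by piecewise-linear interpolation in the space variable for bounded $L_{ip}$ arguments (using the standard fact that a bounded Lipschitz cylinder function times an $L_G^1$ element remains in $L_G^1$), then approximation of a general $X$ by $L_{ip}$ elements, Cauchyness via the workhorse estimate, and a capacitary Borel--Cantelli identification of the limit; and strict monotonicity follows from ($H_l$)(2) together with Proposition \ref{comparison}(i), whose hypothesis $v(\cdot)>0$ you verify correctly because the complement of the monotonicity event is polar, so $v=1$. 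The more standard route, presumably the one in \cite{LiuW}, is shorter: by the Denis--Hu--Peng characterization, $L_G^1(\Omega_T)$ consists of quasi-continuous random variables $Y$ with $\lim_{n}\hE[|Y|\mathbbm{1}_{\{|Y|>n\}}]=0$; quasi-continuity of $l(t,x+X)$ follows from quasi-continuity of $X$ and of $l(t,y)$ for rational $y$ via the uniform modulus, while uniform integrability follows from the linear growth ($H_l$)(4) and $X\in L_G^1(\Omega_T)$. Your construction avoids invoking that characterization (and in fact never uses ($H_l$)(4)), at the price of length. Two small touch-ups: in the identification step pass to a common subsequence along which both $\xi_n\to X$ and $l(t,x+\xi_n)\to Z$ quasi-surely, and fix once and for all a single polar set outside which the modulus bound holds for all pairs (rational pairs plus continuity); both are routine.
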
 

\begin{proposition}\label{prop3.3'}
		Let ($H_l$) hold and  $S\in S_G^p(0,T)$ where $p\geq1$. Then the map $t\rightarrow l(t,S_t)$ is continuous  under the norm $\|\cdot\|_{L_G^1}$; in particular, $t\rightarrow \widehat{\mathbb{E}}[l(t,S_t)]$ is continuous.
\end{proposition}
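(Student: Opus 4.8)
The plan is to combine the uniform continuity of $l$ in the time variable with the uniform continuity of the path $t\mapsto S_t$ in $S_G^p$, and then pass from $L_G^p$-control to $L_G^1$-control via H\"older's inequality. Concretely, for $s,t\in[0,T]$ write
\[
l(t,S_t)-l(s,S_s)=\big(l(t,S_t)-l(t,S_s)\big)+\big(l(t,S_s)-l(s,S_s)\big),
\]
and estimate the two pieces separately. For the second piece, Assumption ($H_l$)(1) gives uniform continuity of $l(\cdot,x)$ in $t$, uniformly in $\omega$ and (on a bounded range of $x$) in $x$; this term is therefore bounded q.s. by a modulus of continuity $m(|t-s|)$ that does not depend on $\omega$, once we know $S_s$ stays in a bounded set with high capacity. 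For the first piece, use uniform continuity of $l(t,\cdot)$ together with the growth bound ($H_l$)(4) to dominate $|l(t,S_t)-l(t,S_s)|$ by a quantity controlled by $|S_t-S_s|$ (small when the increment is small) plus a tail contribution on the event $\{|S_t|\vee|S_s|>R\}$, which is uniformly small in capacity by $S\in S_G^p(0,T)$ and Chebyshev.

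The key quantitative input is Proposition \ref{the3.7}: since $S\in S_G^p(0,T)$, we have $\limsup_{\varepsilon\to 0}\hE[\sup_{t}\sup_{s\in[t,t+\varepsilon]}|S_t-S_s|^p]=0$. Fix $\varepsilon>0$. Choose $R$ large enough that $\hE[(\kappa(1+|S_t|))^p\mathbbm 1_{\{|S_t|>R\}}]$ is small uniformly in $t$ (possible by ($H_l$)(4), $S\in S_G^p$, and a uniform-integrability-type argument, or more simply by $\hE[\sup_t|S_t|^p]<\infty$ and Chebyshev). On the complementary bounded set, the modulus of continuity of $l$ from ($H_l$)(1) handles the $t$-increment, and the modulus of continuity of $l(t,\cdot)$ on $[-R-1,R+1]$ converts $|S_t-S_s|$-smallness into $|l(t,S_t)-l(t,S_s)|$-smallness on the event $\{|S_t-S_s|\le\eta\}$; the bad event $\{|S_t-S_s|>\eta\}$ again has small capacity when $|t-s|$ is small, by Proposition \ref{the3.7} and Chebyshev, and on it we use the crude bound $|l(t,S_t)-l(t,S_s)|\le \kappa(2+|S_t|+|S_s|)$. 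Assembling these, $\hE[\,|l(t,S_t)-l(s,S_s)|^p\,]\to 0$ as $|t-s|\to 0$, uniformly in $s,t$; in particular $\|l(t,S_t)-l(s,S_s)\|_{L_G^1}\le \|l(t,S_t)-l(s,S_s)\|_{L_G^p}\to 0$ by H\"older. Continuity of $t\mapsto \hE[l(t,S_t)]$ then follows from $|\hE[l(t,S_t)]-\hE[l(s,S_s)]|\le \hE[|l(t,S_t)-l(s,S_s)|]\le \|l(t,S_t)-l(s,S_s)\|_{L_G^1}$, using the $\|\cdot\|_{L_G^1}$-Lipschitz property of $\hE$. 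One should also note that each $l(t,S_t)\in L_G^1(\Omega_T)$ (indeed in $L_G^p$, by ($H_l$)(4) and $S\in S_G^p$), so the statement is well-posed; this is essentially Proposition \ref{prop3.3}(i) applied pathwise, or a direct consequence of the growth bound.

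The main obstacle I expect is the bookkeeping that makes all the moduli of continuity and cutoffs genuinely uniform in $s,t\in[0,T]$, rather than merely pointwise: one must be careful that the radius $R$, the spatial modulus of $l$ restricted to $[-R-1,R+1]$, and the capacity bound $V(|S_t-S_s|>\eta)$ can all be chosen simultaneously for all time points. Proposition \ref{the3.7} is exactly what delivers the needed uniformity for the path increments, and ($H_l$)(1) (uniform continuity in $t$, uniformly in $\omega$) is what delivers it for the time-dependence of $l$; the remaining care is just in the order of choosing $\varepsilon$, then $R$, then $\eta$, then $\delta=\delta(\varepsilon,R,\eta)$ so that $|t-s|<\delta$ forces the total bound below $\varepsilon$.
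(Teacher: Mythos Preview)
Your approach is essentially the same as the paper's: both split $l(t,S_t)-l(s,S_s)$ into a time-increment and a space-increment, control the space-increment by a good/bad event decomposition on $\{|S_t-S_s|\lessgtr\delta\}$, invoke Proposition~\ref{the3.7} plus Markov to show the bad event has small capacity, and then use the linear growth bound ($H_l$)(4) on that event. Two refinements relative to your sketch are worth noting. First, Assumption ($H_l$)(1) is \emph{joint} uniform continuity in $(t,x)$ (uniform in $\omega$), so the modulus works for all $x\in\mathbb{R}$ at once; your radius-$R$ truncation and the accompanying ``modulus of $l(t,\cdot)$ on $[-R-1,R+1]$'' are unnecessary, and the paper's proof simply bounds $|l(t,S_t)-l(s,S_t)|\le\varepsilon$ and $|l(s,S_t)-l(s,S_s)|\mathbbm{1}_{\{|S_t-S_s|\le\delta\}}\le\varepsilon$ directly. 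Second, and more substantively, the step where you pass from ``$V(\{|S_t-S_s|>\eta\})$ small'' to ``$\hE\big[(2+|S_t|+|S_s|)\,\mathbbm{1}_{\{|S_t-S_s|>\eta\}}\big]$ small'' is \emph{not} a consequence of Chebyshev in the $G$-setting (the expectation is a supremum over a non-dominated family). The paper handles this by invoking Lemma~\ref{lemma3.5} applied to $X=1+\sup_{t}|S_t|\in L_G^p(\Omega_T)$, which is precisely the uniform-integrability statement you allude to; you should cite it (or the equivalent characterization $\hE[|X|^p\mathbbm{1}_{\{|X|>n\}}]\to 0$ for $X\in L_G^p$) rather than Chebyshev. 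With these two simplifications your argument coincides with the paper's, and there is no need to detour through $L_G^p$ and H\"older---the paper works directly in $L_G^1$.
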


\begin{proof}
	By Assumption ($H_l$), for any $\varepsilon>0$, there exists a constant $\delta>0$, such that $|l(t,x)-l(s,y)|\leq \varepsilon$ for any $|t-s|+|x-y|\leq \delta$. It is easy to check that, for any $|s-t|\leq \delta$,
	\begin{align*}
		\widehat{\mathbb{E}}|l(t,S_t)-l(s,S_s)|
		\leq &\widehat{\mathbb{E}}|l(t,S_t)-l(s,S_t)|+\widehat{\mathbb{E}}\Big[|l(s,S_t)-l(s,S_s)| \mathbbm{1}_{\{|S_t-S_s|>\delta\}}\Big]\\
		&+\widehat{\mathbb{E}}\Big[|l(s,S_t)-l(s,S_s)| \mathbbm{1}_{\{|S_t-S_s|\leq\delta\}}\Big]\\
		\leq &2\varepsilon+C\widehat{\mathbb{E}}\Big[ \Big(1+\sup_{t\in[0,T]}|S_t|\Big) \mathbbm{1}_{\{|S_t-S_s|>\delta\}}\Big].
	\end{align*}
 By Proposition \ref{the3.7} and  Markov's inequality, we have 
$$\lim_{s\rightarrow t} V(\{|S_t-S_s|>\delta\})=0.$$
Then by Lemma \ref{lemma3.5}, 
$$
\limsup_{s\rightarrow t}\widehat{\mathbb{E}}|l(t,S_t)-l(s,S_s)|\leq 2\varepsilon.
$$
Since $\varepsilon$ can be chosen arbitrarily small, the proof is complete.
\end{proof}\vspace{0.1in}


\begin{proof}[Proof of Theorem \ref{SP}] The proof proceeds in two steps, where we prove existence in the first step and then uniqueness in the second step.
	\bigskip
	
	\noindent{\bf Step 1.} 
	   By Proposition \ref{prop3.3}, $\widehat{\mathbb{E}}[l(t,x+X)]$ is well-defined for $X\in L_G^1(\Omega_T)$.  In order to solve the Skorokhod problem  $\mathbb{SP}(l,S)$, we need to use the operator $L_t:L_G^1(\Omega_T)\rightarrow [0,\infty)$ defined as follows:
	\begin{align}\label{construction}
		L_t(X)=\inf\Big\{x\geq 0:\widehat{\mathbb{E}}[l(t,x+X)]\geq 0\Big\}.
	\end{align}
	Under Assumption ($H_l$), the operator $L_t$ is well-defined since by Lemma \ref{lemma2.5},
	\begin{align*}
		\lim_{x\rightarrow \infty}\hE[l(t,x+X)]=\hE\lim_{x\rightarrow \infty}l(t,x+X)=\hE\lim_{x\rightarrow \infty}l(t,x)>0.
	\end{align*}
		It follows from Propositions \ref{prop3.3} and \ref{prop3.3'} that the map $x\rightarrow \widehat{\mathbb{E}}[l(t,x+S_t)]$ is continuous and strictly increasing, and the map $t\rightarrow \widehat{\mathbb{E}}[l(t,x+S_t)]$ is continuous. We first prove that the map $t\rightarrow L_t(S_t)$ is continuous. First, suppose that $\hE[l(t,S_t)]>0$, which by the definition of $L_t(S_t)$ yields that $L_t(S_t)=0$. Note that 
	\begin{align*}
	\lim_{s\rightarrow t}\hE[l(s,X_s)]=\hE[l(t,X_t)]>0.
	\end{align*}
	 Then, if $|s-t|$ is small enough, we have $\hE[l(s,X_s)]>0$ and consequently, $L_s(X_s)=0$. Second, suppose that $\hE[l(t,S_t)]\leq 0$. For any $\varepsilon>0$, we have
	\begin{align*}
	&\lim_{s\rightarrow t}\hE[l(s,L_t(S_t)-\varepsilon+S_s)]=\hE[l(t,L_t(S_t)-\varepsilon+S_t)]<\hE[l(t,L_t(S_t)+S_t)]=0\\
&\qquad\text{and}\quad	0<\hE[l(t,L_t(S_t)+\varepsilon+S_t)]=\lim_{s\rightarrow t}\hE[l(s,L_t(S_t)+\varepsilon+S_s)],
	\end{align*}
	where we used Proposition \ref{comparison}. Then, if $|s-t|$ is small enough, we have $$\hE[l(s,L_t(S_t)-\varepsilon+S_s)]<0<\hE[l(s,L_t(S_t)+\varepsilon+S_s)],$$ which implies that $|L_s(S_s)-L_t(S_t)|\leq \varepsilon$. Therefore, the map $t\rightarrow L_t(S_t)$ is continuous. Define the function $A$ by setting
	\begin{align*}
		A_t=\sup_{s\in[0,t]}L_s(S_s),
	\end{align*}
    and then define
    \begin{align*}
    	X_t=S_t+A_t.
    \end{align*}
We are going to show that $(X,A)$ is the solution to the Skorokhod problem  $\mathbb{SP}(l,S)$. In fact, it is clear that $(X,A)\in S_G^p(0,T)\times I[0,T]$ and 
$$
\widehat{\mathbb{E}}[l(t,X_t)]=\widehat{\mathbb{E}}[l(t,S_t+A_t)]\geq \widehat{\mathbb{E}}[l(t,S_t+L_t(S_t))]\geq 0.
$$ 
By the definition of $A$, we have $A_t=L_t(S_t)$, $dA_t$-a.e. and $\mathbbm{1}_{\{L_t(S_t)=0\}}=0$ $dA_t$-a.e.. Noting that $\widehat{\mathbb{E}}[l(t,S_t+L_t(S_t))]=0$ when $L_t(S_t)>0$, we finally have
\begin{align*}
	\int_0^T \widehat{\mathbb{E}}[l(t,X_t)]dA_t=\int_0^T \widehat{\mathbb{E}}[l(t,S_t+L_t(S_t))]dA_t=\int_0^T\widehat{\mathbb{E}}[l(t,S_t+L_t(S_t))] \mathbbm{1}_{\{L_t(S_t)>0\}}dA_t=0.
\end{align*} 

	\smallskip

\noindent{\bf Step 2.} We now prove uniqueness. Suppose that $(X^1,A^1)$ and $(X^2,A^2)$ are two solutions to the Skorokhod problem  $\mathbb{SP}(l,S)$. Suppose that there exists some $t\in(0,T)$, such that $A^1_t<A^2_t$. Set 
\begin{align*}
\tau=\sup\Big\{u\leq t: A^1_u=A^2_u\Big\}.
\end{align*}
It is easy to check that for $u\in(\tau,t]$, $A^1_u<A^2_u$. 
Due to the strict increasing property of $l$ and Proposition \ref{comparison}, for any $u\in(\tau,t]$, we have
\begin{align*}
0\leq \widehat{\mathbb{E}}[l(u,S_u+A^1_u)]<
\widehat{\mathbb{E}}[l(u,S_u+A^2_u)].
\end{align*}
The flat-off condition (c) in Definition \ref{SPwithMR} implies that $dA^2=0$ on the interval $[\tau,t]$. It follows that 
\begin{align*}
A^2_{\tau}=A^2_t>A^1_t\geq A^1_{\tau},
\end{align*}
which contradicts the definition of $\tau$. The proof is complete.
\end{proof}\vspace{0.1in}

\subsection{Second construction of solutions}
\label{sec:second}
In this subsection, we offer an alternative construction of the solution to the Skorokhod problem with mean reflection. The advantage of this approach lies in its ability to establish a connection between the Skorokhod problem with mean reflection and the deterministic Skorokhod problem. However, a drawback is that it requires an additional assumption ($H'_l$) to achieve Theorem \ref{SP}, hence labeled as Theorem \ref{SP2}.
\begin{theorem}\label{SP2}
Suppose Assumptions ($H_l$) and ($H_S$) hold, as well as the following condition: 
\begin{itemize}
\item[($H'_l$)] There exist an increasing and continuous function $F:[0,\infty)\rightarrow [0,\infty)$ with $F(0)=0$ and two constants $0<c_l<C_l$, such that 
\begin{itemize}
\item[(1)] For any $t,s\in[0,T]$, $x\in\mathbb{R}$
\begin{align*}
|l(t,x)-l(s,x)|\leq F(|t-s|).
\end{align*}
\item[(2)] For any $t\in[0,T]$ and $x,y\in\mathbb{R}$, 
			\begin{equation}\label{bilip}
			c_l|x-y|\leq |l(t,x)-l(t,y)|\leq C_l|x-y|.
			\end{equation}
\end{itemize}
\end{itemize}
Then there exists a unique solution $(X,A)\in S_G^p(0,T)\times I[0,T]$ to the Skorokhod problem $\mathbb{SP}(l,S)$.
\end{theorem}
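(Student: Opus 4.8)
The plan is to reduce the Skorokhod problem $\mathbb{SP}(l,S)$ to the classical deterministic Skorokhod problem on $[0,T]$. Under ($H_l$) and ($H'_l$) the map $x\mapsto\widehat{\mathbb E}[l(t,x+S_t)]$ is a continuous strictly increasing bijection of $\mathbb R$; its zero, viewed as a function of $t$, is a continuous deterministic path, and feeding its negative into the deterministic Skorokhod map will produce the regulator $A$. The two ingredients to set up are thus the analysis of this map and the (well-known) solution of the deterministic problem.

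First I would study $\phi(t,x):=\widehat{\mathbb E}[l(t,x+S_t)]$, which by Proposition \ref{prop3.3} (with $S_t\in L_G^1(\Omega_T)$) is well defined, finite, continuous and strictly increasing in $x$. Using sublinearity and monotonicity of $\widehat{\mathbb E}$ together with the bi-Lipschitz bound \eqref{bilip} — the sign coming from ($H_l$)(2) — one upgrades this to $c_l(x-y)\le\phi(t,x)-\phi(t,y)\le C_l(x-y)$ for $x\ge y$, so $\phi(t,\cdot)$ is bi-Lipschitz with $t$-independent constants. Splitting $\widehat{\mathbb E}|l(t,x+S_t)-l(s,x+S_s)|\le F(|t-s|)+C_l\,\widehat{\mathbb E}|S_t-S_s|$ and controlling $\sup_{|t-s|\le\varepsilon}\widehat{\mathbb E}|S_t-S_s|$ by Proposition \ref{the3.7}, one obtains a modulus $\rho$, independent of $x$, with $|\phi(t,x)-\phi(s,x)|\le\rho(|t-s|)$. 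The lower bound forces $\phi(t,x)\to-\infty$ as $x\downarrow-\infty$, while Lemma \ref{lemma2.5} and ($H_l$)(3) give $\lim_{x\uparrow\infty}\phi(t,x)=\widehat{\mathbb E}[\lim_{x\uparrow\infty}l(t,x)]>0$; hence for each $t$ there is a unique $m_t\in\mathbb R$ with $\phi(t,m_t)=0$, which is precisely $\rho_t(S_t)$ in the notation of Example \ref{example1}. The estimate $c_l|m_t-m_s|\le|\phi(t,m_t)-\phi(t,m_s)|=|\phi(s,m_s)-\phi(t,m_s)|\le\rho(|t-s|)$ shows $t\mapsto m_t$ is continuous, and ($H_S$) gives $\phi(0,0)=\widehat{\mathbb E}[l(0,S_0)]\ge0=\phi(0,m_0)$, hence $m_0\le0$.

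Next I would apply the deterministic Skorokhod map to the continuous path $t\mapsto-m_t$, which starts at $-m_0\ge0$; this yields $A_t:=\sup_{s\in[0,t]}m_s^+\in I[0,T]$, together with $A_t-m_t\ge0$ and $\int_0^T(A_t-m_t)\,dA_t=0$, the last identity meaning that $dA_t$ is carried by $\{A_t=m_t\}$. Setting $X_t:=S_t+A_t$, which lies in $S_G^p(0,T)$ because $A$ is a bounded deterministic continuous function, property (a) is immediate; property (b) follows from $\widehat{\mathbb E}[l(t,X_t)]=\phi(t,A_t)\ge\phi(t,m_t)=0$ since $A_t\ge m_t$ and $\phi(t,\cdot)$ is increasing; and property (c) holds because on $\{A_t=m_t\}$ one has $\phi(t,A_t)=0$, so $\int_0^T\widehat{\mathbb E}[l(t,X_t)]\,dA_t=\int_0^T\phi(t,A_t)\,dA_t=0$. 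This establishes existence; I would also record that the resulting $A$ coincides with $\sup_{s\le t}L_s(S_s)$ from the proof of Theorem \ref{SP}, so the two constructions agree. For uniqueness I would invoke Step 2 of the proof of Theorem \ref{SP} verbatim, since that argument uses only ($H_l$), the strict monotonicity of $l$, and the strict comparison Proposition \ref{comparison}.

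The main obstacle, and the precise role of ($H'_l$), is to guarantee that $\phi(t,\cdot)$ is onto $\mathbb R$ for every $t$, so that $m_t$ is finite — this can fail under ($H_l$) alone, where $\rho_t(S_t)$ may equal $-\infty$ — and that $t\mapsto m_t$ is continuous, which requires assembling the uniform-in-$x$ time modulus of $\phi$ from the temporal bound in ($H'_l$), the growth condition ($H_l$)(4), and the $L_G^1$-continuity of $S$ via Proposition \ref{the3.7}. Once $m$ is in hand, identifying $A$ as the deterministic Skorokhod regulator of $-m$ and checking (a)–(c) are routine, and the payoff is that standard regularity properties of the deterministic Skorokhod map transfer to $A$, which is what Proposition \ref{proposition2.4} and Corollary \ref{corollary2.5} record.
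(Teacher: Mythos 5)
Your construction is correct and essentially coincides with the paper's own second construction: your map $\phi(t,\cdot)$ and its zero $m_t$ are the paper's $H(t,\cdot,S_t)$ and $\bar l_t-\hE[S_t]$ with the centering by $\hE[S_t]$ removed, and your regulator $A_t=\sup_{s\in[0,t]}m_s^+$ is exactly the paper's $A_t=\sup_{u\in[0,t]}(s_u-\bar l_u)^-$ obtained from the deterministic Skorokhod problem $\mathrm{SP}(\bar l,s)$. The only (harmless) deviation is in the uniqueness step, where you reuse Step 2 of the proof of Theorem \ref{SP}, while the paper maps any second solution back to the deterministic Skorokhod problem and invokes uniqueness there.
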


Both Assumptions ($H_l$) and ($H'_l$) impose regularity conditions on the function $l$. We delineate the comparison in the following remark.
 \begin{remark}\label{lem3.3}
  	Clearly, Assumption ($H'_l$) yields that $l$ is uniformly continuous in $(t,x)$, which is ($H_l$) (1). Next, suppose ($H'_l$) (2) holds true, and then for any $X\in L_G^1(\Omega_T)$, $L_t(X)$ is well-defined if we consider ($H_l$) but omit $\hE[\lim_{x\rightarrow \infty}l(t,x)]>0$ in ($H_l$) (3). In fact, for any $x\geq 0$,  ($H'_l$) (2) implies that 
	\begin{align*}
	l(t,x+X)-l(t,X)\geq c_l x.
	\end{align*}
	It follows that 
	\begin{align}\label{infinity}
	\lim_{x\rightarrow \infty}\hE[l(t,x+X)]\geq \lim_{x\rightarrow \infty}(\hE[l(t,X)]+c_l x)=\infty.
	\end{align}
		Furthermore, under Assumptions ($H_l$) and ($H'_l$) (2), Lemma 3.12 in \cite{LiuW} indicates that for any $X,Y\in L_G^1(\Omega_T)$, we have
  	\begin{align*}
  		|L_t(X)-L_t(Y)|\leq \frac{C_l}{c_l}\widehat{\mathbb{E}}|X-Y|,\quad \forall t\in[0,T].
  	\end{align*}
  \end{remark}

For any $t\in[0,T]$ and $Y\in L_G^1(\Omega_T)$, recalling that Proposition \ref{prop3.3} ensures that $l(t,Y-\hE[Y]+z)\in L_G^1(\Omega_T)$ for each fixed $z\in\mathbb{R}$, we define a mapping $H(t,\cdot,Y):\mathbb{R}\rightarrow \mathbb{R}$ by
\begin{align}\label{secondconstruction}
H(t,z,Y)=\hE[l(t,Y-\hE[Y]+z)].
\end{align}

\begin{lemma}\label{lemma2.2'}
Suppose that $l$ satisfies Assumptions ($H_l$) and ($H'_l$). 
Then, for any $t\in[0,T]$ and $Y\in L_G^1(\Omega_T)$, $H(t,\cdot,Y)$ is strictly increasing and continuous, with
\begin{displaymath}
\lim_{z\rightarrow -\infty}H(t,z,Y)=-\infty\quad\text{and}\quad \lim_{z\rightarrow +\infty}H(t,z,Y)=+\infty.
\end{displaymath}
\end{lemma}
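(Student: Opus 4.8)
The plan is to establish the three claimed properties of $H(t,\cdot,Y)$ — strict monotonicity, continuity, and the two limits — by reducing them to the already-available facts about $l(t,x+X)$ in Proposition~\ref{prop3.3} together with the bi-Lipschitz bound ($H'_l$)(2). Fix $t\in[0,T]$ and $Y\in L_G^1(\Omega_T)$, and write $Z:=Y-\hE[Y]\in L_G^1(\Omega_T)$, so that $H(t,z,Y)=\hE[l(t,Z+z)]$. Note this is literally the map $z\mapsto\hE[l(t,z+Z)]$ studied in Proposition~\ref{prop3.3}(ii) with $X$ replaced by $Z$, so continuity under $z$ and strict increase in $z$ are immediate from that proposition (which in turn uses ($H_l$), available by hypothesis). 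Hence the first two assertions need essentially no new work beyond identifying the right substitution.

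For the limits, I would use ($H'_l$)(2). For $z\ge 0$ the lower Lipschitz bound gives $l(t,Z+z)-l(t,Z)\ge c_l z$ q.s., hence by monotonicity of $\hE$,
\begin{align*}
H(t,z,Y)=\hE[l(t,Z+z)]\ge \hE[l(t,Z)]+c_l z\xrightarrow[z\to+\infty]{}+\infty,
\end{align*}
which is exactly the computation already recorded in \eqref{infinity} of Remark~\ref{lem3.3}. Symmetrically, for $z\le 0$ one has $l(t,Z)-l(t,Z+z)\ge c_l|z|$ q.s., so $H(t,z,Y)\le \hE[l(t,Z)]-c_l|z|\to-\infty$ as $z\to-\infty$. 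Here one should be slightly careful that $\hE[l(t,Z)]$ is a genuine finite real number: this follows because $l(t,\cdot+Z)\in L_G^1(\Omega_T)$ by Proposition~\ref{prop3.3}(i) (with $z=0$), so its $G$-expectation is finite. The monotonicity of $\hE$ (sublinearity plus $\hE[c]=c$ for constants $c$) justifies pulling the constant $\pm c_l z$ through the expectation.

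I do not anticipate a serious obstacle here; the lemma is essentially a packaging step. The only point requiring minor care is to make sure all quantities are well-defined before manipulating them — i.e., that $Z\in L_G^1(\Omega_T)$ (clear, since $Y\in L_G^1(\Omega_T)$ and subtracting the constant $\hE[Y]$ keeps it in $L_G^1(\Omega_T)$), and that $l(t,Z+z)\in L_G^1(\Omega_T)$ for each real $z$ (this is Proposition~\ref{prop3.3}(i)), so that $H(t,z,Y)$ is a finite real number rather than an extended-real one. With those in hand, strict monotonicity and continuity are a direct citation of Proposition~\ref{prop3.3}(ii), and the two limits come from the one-line estimates above using ($H'_l$)(2) and monotonicity of $\hE$. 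I would therefore present the proof in three short paragraphs: (i) well-posedness and the substitution $Z=Y-\hE[Y]$; (ii) strict increase and continuity via Proposition~\ref{prop3.3}; (iii) the two limits via the bi-Lipschitz bound.
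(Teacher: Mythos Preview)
Your proposal is correct and follows essentially the same route as the paper: both obtain strict monotonicity from Proposition~\ref{prop3.3}(ii) and the limits from the bi-Lipschitz lower bound in ($H'_l$)(2) via the computation \eqref{infinity}. The only cosmetic difference is that the paper proves continuity by exhibiting the explicit Lipschitz estimate $|H(t,z,Y)-H(t,z',Y)|\le C_l|z-z'|$, whereas you simply cite Proposition~\ref{prop3.3}(ii); both are valid.
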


\begin{proof}
For any $z,z'\in\mathbb{R}$, it is easy to check that 
\begin{align*}
&\hspace{-1cm}\Big|\hE[l(t,Y-\hE[Y]+z)]-\hE[l(t,Y-\hE[Y]+z')]\Big|\\
\leq &\hE\Big|l(t,Y-\hE[Y]+z)-l(t,Y-\hE[Y]+z')\Big|\leq C_l|z-z'|.
\end{align*}
Hence, $H(t,\cdot,Y)$ is continuous. Suppose that $z<z'$.  Proposition \ref{prop3.3} (ii) yields that 
\begin{align*}
H(t,z,Y)=\hE[l(t,Y-\hE[Y]+z)]< \hE[l(t,Y-\hE[Y]+z')]=H(t,z',Y),
\end{align*}
which implies that $H(t,\cdot,Y)$ is strictly increasing. The last assertion can be proved similarly with the help of equation \eqref{infinity}. The proof is complete. 
\end{proof}\vspace{0.1in}

By Lemma \ref{lemma2.2'}, we may define the inverse map $H^{-1}(t,\cdot,Y):\mathbb{R}\rightarrow \mathbb{R}$. In fact, for any $z\in\mathbb{R}$, 
\begin{align*}
H^{-1}(t,z,Y)=\bar{z} \Longleftrightarrow \hE[l(t,Y-\hE[Y]+\bar{z})]=z.
\end{align*}

\begin{lemma}\label{lemma2.2}
Suppose that $l$ satisfies Assumptions ($H_l$) and ($H'_l$), and $Y\in S_G^p(0,T)$ with $p\geq 1$. If $\bar{z}=\{\bar{z}_t\}_{t\in[0,T]}\in C[0,T]$, then $z=\{H(t,\bar{z}_t,Y_t)\}_{t\in[0,T]}\in C[0,T]$. Similarly, if $z=\{z_t\}_{t\in[0,T]}\in C[0,T]$, then $\bar{z}=\{H^{-1}(t,z_t,Y_t)\}_{t\in[0,T]}\in C[0,T]$.
\end{lemma}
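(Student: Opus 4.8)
\textbf{Proof proposal for Lemma \ref{lemma2.2}.}

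The plan is to establish the two continuity claims by exhibiting joint continuity of the maps $(t,z)\mapsto H(t,z,Y_t)$ and $(t,z)\mapsto H^{-1}(t,z,Y_t)$ on $[0,T]\times\mathbb{R}$, from which the stated assertions follow immediately by composing with the continuous curve $t\mapsto \bar z_t$ (resp.\ $t\mapsto z_t$). Concretely, for the first claim I would fix $t_0\in[0,T]$ and $\bar z_{t_0}$, and estimate
\begin{align*}
\big|H(t,\bar z_t,Y_t)-H(t_0,\bar z_{t_0},Y_{t_0})\big|
&\leq \big|H(t,\bar z_t,Y_t)-H(t,\bar z_{t_0},Y_t)\big|
+\big|H(t,\bar z_{t_0},Y_t)-H(t_0,\bar z_{t_0},Y_t)\big|\\
&\quad+\big|H(t_0,\bar z_{t_0},Y_t)-H(t_0,\bar z_{t_0},Y_{t_0})\big|.
\end{align*}
The first term is bounded by $C_l|\bar z_t-\bar z_{t_0}|$ by the Lipschitz estimate already proven in the proof of Lemma \ref{lemma2.2'}, and tends to $0$ as $t\to t_0$ since $\bar z\in C[0,T]$. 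For the second term, using ($H'_l$)(1) it is at most $\hE\big|l(t,Y_t-\hE[Y_t]+\bar z_{t_0})-l(t_0,Y_t-\hE[Y_t]+\bar z_{t_0})\big|\leq F(|t-t_0|)$, which vanishes since $F$ is continuous with $F(0)=0$.

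The genuinely delicate term is the third one, $\big|H(t_0,\bar z_{t_0},Y_t)-H(t_0,\bar z_{t_0},Y_{t_0})\big|$, which measures sensitivity of $H$ in its $Y$-slot. Writing it out, it is at most
\begin{align*}
\hE\big|l(t_0,Y_t-\hE[Y_t]+\bar z_{t_0})-l(t_0,Y_{t_0}-\hE[Y_{t_0}]+\bar z_{t_0})\big|
\leq C_l\,\hE\big|(Y_t-Y_{t_0})-(\hE[Y_t]-\hE[Y_{t_0}])\big|
\leq 2C_l\,\hE|Y_t-Y_{t_0}|,
\end{align*}
using ($H'_l$)(2) and $|\hE[Y_t]-\hE[Y_{t_0}]|\leq\hE|Y_t-Y_{t_0}|$. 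So it remains to show $\hE|Y_t-Y_{t_0}|\to 0$ as $t\to t_0$ for $Y\in S_G^p(0,T)$. This is exactly the content of the uniform continuity Proposition \ref{the3.7}: for $t\geq t_0$ we have $\hE|Y_t-Y_{t_0}|\leq \hE\big[\sup_{s\in[t_0,t_0+\varepsilon]}|Y_{t_0}-Y_s|\big]$ with $\varepsilon=|t-t_0|$, which is dominated by $\hE\big[\sup_{u\in[0,T]}\sup_{s\in[u,u+\varepsilon]}|Y_u-Y_s|\big]\to 0$; the case $t<t_0$ is symmetric (bound $\hE|Y_t-Y_{t_0}|$ by the same supremum over the window $[t,t_0]$). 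Combining the three bounds gives continuity of $t\mapsto H(t,\bar z_t,Y_t)$, and since $t_0$ was arbitrary this proves the first claim; I expect this $S_G^p$-regularity step (invoking Proposition \ref{the3.7}) to be the main point to get right.

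For the second claim, rather than redoing estimates I would derive the continuity of $t\mapsto H^{-1}(t,z_t,Y_t)$ from the first claim plus the quantitative invertibility of $H$. From ($H'_l$)(2) and Proposition \ref{prop3.3}(ii) one has, for fixed $t$ and $Y$, the two-sided bound $c_l|z-z'|\leq |H(t,z,Y)-H(t,z',Y)|\leq C_l|z-z'|$; in particular $H(t,\cdot,Y_t)^{-1}$ is $c_l^{-1}$-Lipschitz uniformly in $t$. Setting $\bar z_t=H^{-1}(t,z_t,Y_t)$, I would write
\begin{align*}
c_l\,|\bar z_t-\bar z_{t_0}|
\leq \big|H(t,\bar z_t,Y_t)-H(t,\bar z_{t_0},Y_t)\big|
\leq \big|z_t-z_{t_0}\big| + \big|H(t_0,\bar z_{t_0},Y_{t_0})-H(t,\bar z_{t_0},Y_t)\big|,
\end{align*}
where the last step uses $z_t=H(t,\bar z_t,Y_t)$ and $z_{t_0}=H(t_0,\bar z_{t_0},Y_{t_0})$. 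The first term on the right tends to $0$ because $z\in C[0,T]$; the second tends to $0$ by exactly the joint-continuity estimate established for the first claim (with the fixed point $\bar z_{t_0}$ in the $z$-slot). Dividing by $c_l$ yields $|\bar z_t-\bar z_{t_0}|\to 0$, completing the proof.
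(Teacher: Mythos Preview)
Your proof of the first claim is essentially the paper's argument spelled out: the paper compresses your three-term decomposition into the single line $|z_t-z_s|\leq F(|t-s|)+C_l\{|\bar z_t-\bar z_s|+2\hE|Y_t-Y_s|\}$ and then invokes Proposition~\ref{the3.7}, exactly as you do.

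For the second claim your route is genuinely different, and cleaner. The paper argues by sequential compactness: it first shows $\{\bar z_{t_n}\}$ is bounded (by contradiction, using that $H(t,z,Y_t)\to\infty$ as $z\to\infty$ from Lemma~\ref{lemma2.2'}), then passes to a convergent subsequence and identifies the limit via continuity of $H$. You instead exploit the quantitative lower bound $c_l|z-z'|\leq|H(t,z,Y_t)-H(t,z',Y_t)|$ from $(H'_l)(2)$ to write down a direct estimate, which yields an explicit modulus of continuity for $\bar z$ in terms of those of $z$, $F$, and $t\mapsto\hE|Y_t-Y_{t_0}|$---a stronger conclusion than mere continuity, obtained without any subsequence extraction. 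The paper's softer argument, on the other hand, would survive weakening $(H'_l)(2)$ to the qualitative statement that $H(t,\cdot,Y)$ is a strictly increasing homeomorphism of $\mathbb{R}$, so it is more robust to relaxations of the bi-Lipschitz hypothesis.
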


\begin{proof}
For any $\bar{z}\in C[0,T]$ and any $s,t\in[0,T]$, it is easy to check that 
\begin{align*}
|z_t-z_s|\leq F(|t-s|)+C_l\Big\{|\bar{z}_t-\bar{z_s}|+2\hE|Y_t-Y_s|\Big\}.
\end{align*}
By Proposition \ref{the3.7}, we have $z\in C[0,T]$. It remains to prove the second statement. Given a sequence $\{t_n\}_{n\in\mathbb{N}}\subset [0,T]$ with $\lim_{n\rightarrow \infty}t_n=t$, we first claim that the sequence $\{\bar{z}_{t_n}\}$ is bounded. Otherwise, there exists a subsequence $\{t_{n_k}\}_{k\in\mathbb{N}}$ with $n_k\rightarrow\infty$ as $k\rightarrow\infty$, such that  $\lim_{k\rightarrow\infty}\bar{z}_{t_{n_k}}=\infty$. By Proposition \ref{the3.7} and the continuity property of $l$ in $(t,x)$, we have
\begin{align*}
\lim_{k\rightarrow \infty}\Big|\hE[l(t,Y_{t}-\hE[Y_{t}]+\bar{z}_{t_{n_k}})]-\hE[l(t_{n_k},Y_{t_{n_k}}-\hE[Y_{t_{n_k}}]+\bar{z}_{t_{n_k}})]\Big|=0.
\end{align*}
Lemma \ref{lemma2.2'} indicates that 
\begin{align*}
\lim_{k\rightarrow \infty}\hE[l(t,Y_{t}-\hE[Y_{t}]+\bar{z}_{t_{n_k}})]=\infty.
\end{align*}
Hence, we deduce that
\begin{align*}
z_t=\lim_{k\rightarrow \infty} z_{t_{n_k}}=\lim_{k\rightarrow \infty}\hE[l(t_{n_k},Y_{t_{n_k}}-\hE[Y_{t_{n_k}}]+\bar{z}_{t_{n_k}})]=\infty,
\end{align*}
which is a contradiction. To show that $\bar{z}_{t_n}\rightarrow \bar{z}_t$, it suffices to prove that for any subsequence $\{n'\}\subseteq\mathbb{N}$, one can choose a subsequence $\{n''\}\subseteq\{n'\}$ such that $\bar{z}_{t_{n''}}\rightarrow \bar{z}_t$. Since $\{\bar{z}_{t_{n'}}\}$ is bounded, there exists a subsequence $\{n''\}\subseteq{n'}$ such that $\bar{z}_{t_{n''}}\rightarrow z''$. By the definition of $\bar{z}$, we have
\begin{align*}
\hE[l(t_{n''},Y_{t_{n''}}-\hE[Y_{t_{n''}}]+\bar{z}_{t_{n''}})]=z_{t_{n''}}.
\end{align*}
Letting $n''$ go to infinity, we obtain that 
\begin{align*}
\hE[l(t,Y_t-\hE[Y_t]+\bar{z}'')]=z_t,
\end{align*}
which implies that $z''=\bar{z}_t$ by the definition of $\bar{z}$. The proof is complete. 
\end{proof}\vspace{0.1in}


In the following we give the proof of Theorem \ref{SP2}, which is  different to that of Theorem \ref{SP}. 
\smallskip

\begin{proof}[Proof of Theorem \ref{SP2}]
For any $t\in[0,T]$, set 
$$s_t=\hE[S_t]\quad \text{and}\quad\bar{l}_t=H^{-1}(t,0,S_t).$$ By Proposition \ref{the3.7} and Lemma \ref{lemma2.2}, we have $s=\{s_t\}_{t\in[0,T]}\in C[0,T]$ and $\bar{l}=\{\bar{l}_t\}_{t\in[0,T]}\in C[0,T]$. The trivial equality 
\begin{align*}
\hE[l(0,S_0-\hE[S_0]+\hE[S_0])]=\hE[l(0,S_0)]
\end{align*}
implies that $$\hE[S_0]=H^{-1}(0,\hE[l(0,S_0)],S_0).$$ Noting that $H^{-1}(t,\cdot,Y)$ is strictly increasing for any $t\in[0,T]$ and $Y\in L_G^1(\Omega_T)$, and $\hE[l(0,S_0)]\geq 0$, we have $\hE[S_0]\geq \bar{l}_0$. Now, let $(x,A)$ be the unique solution of the Skorokhod problem $\textrm{SP}(\bar{l},s)$, which is defined to satisfy the following conditions:
\begin{itemize}
\item[(1')] $x_t=s_t+A_t\geq \bar{l}_t$ for $t\in[0,T]$,
\item[(2')] $A\in I[0,T]$ and $\int_0^T(x_t-\bar{l}_t)dA_t=0$.
\end{itemize}
In fact, $A_t=\sup_{u\in[0,t]}(s_u-\bar{l}_u)^-$. Set $X_t=S_t+A_t$. We claim that $(X,A)$ is the solution to the Skorokhod problem  $\mathbb{SP}(l,S)$. First, simple calculation implies that
\begin{align*}
\hE[l(t,X_t)]=\hE[l(t,S_t+A_t)]=\hE[l(t,S_t-\hE[S_t]+x_t)]=H(t,x_t,S_t)\geq H(t,\bar{l}_t,S_t)=0.
\end{align*}
The above equation also indicates that $x_t=H^{-1}(t,\hE[l(t,X_t)],S_t)$. Furthermore, note that
\begin{align*}
x_t>\bar{l}_t\Longleftrightarrow \hE[l(t,S_t-\hE[S_t]+x_t)]>0 \Longleftrightarrow \hE[l(t,X_t)]>0,
\end{align*} 
which, together with the above condition (2'), implies that $\int_0^T \hE[l(t,X_t)]dA_t=0$.

It remains to prove the uniqueness. Suppose that $(X',A')$ is another solution of the Skorokhod problem  $\mathbb{SP}(l,S)$. Similar arguments as above could show that $(x',A')$ where $x'_t=H^{-1}(t,\hE[l(t,X'_t)],S_t)$, is the solution of the Skorokhod problem $\textrm{SP}(\bar{l},s)$. Due to the uniqueness of solutions to the classical Skorokhod problem, we have $A=A'$. Consequently, $X=X'$. The proof is complete.
\end{proof}\vspace{0.1in}

The preceding proof establishes the connection between the solution to a Skorokhod problem with mean reflection and the solution to a classical Skorokhod problem, thereby aiding in obtaining the following a priori estimates.

\begin{proposition}\label{proposition2.4}
	Suppose $l^i$ and $S^i$ satisfy Assumptions ($H_l$), ($H'_l$), and ($H_S$), and let $(X^i,A^i)$ be the solution of the Skorokhod problem  $\mathbb{SP}(l^i,S^i)$, for $i=1,2$. Then, there exists a constant $C$ depending on $c_l,C_l$, such that 
\begin{align*}
\sup_{t\in[0,T]}|A_t^1-A_t^2|&\leq C\Bigg\{\sup_{t\in[0,T]}\hE\sup_{x\in\mathbb{R}}|l^1(t,x)-l^2(t,x)|+\sup_{t\in[0,T]}\hE|S_t^1-S_t^2|\Bigg\},\\
\text{and}\quad\hE \sup_{t\in[0,T]}|X_t^1-X_t^2|&\leq C\Bigg\{\sup_{t\in[0,T]}\hE\sup_{x\in\mathbb{R}}|l^1(t,x)-l^2(t,x)|+\hE\sup_{t\in[0,T]}|S_t^1-S_t^2|\Bigg\}.
\end{align*}
\end{proposition}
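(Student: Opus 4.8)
The plan is to exploit the reduction, established in the proof of Theorem \ref{SP2}, of the mean-reflected Skorokhod problem $\mathbb{SP}(l^i,S^i)$ to a deterministic Skorokhod problem $\textrm{SP}(\bar{l}^i,s^i)$ with $s^i_t=\hE[S^i_t]$ and $\bar{l}^i_t=H^i(\,\cdot\,)^{-1}(t,0,S^i_t)$, together with the well-known Lipschitz stability of the classical one-sided Skorokhod map. Recall that for the classical problem one has $A^i_t=\sup_{u\in[0,t]}(s^i_u-\bar{l}^i_u)^-$, so that $\sup_{t\in[0,T]}|A^1_t-A^2_t|\leq 2\sup_{t\in[0,T]}\big|(s^1_t-\bar{l}^1_t)-(s^2_t-\bar{l}^2_t)\big|$. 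Hence the whole problem reduces to estimating $\sup_t|s^1_t-s^2_t|$ and $\sup_t|\bar{l}^1_t-\bar{l}^2_t|$ in terms of the two data quantities on the right-hand side.

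The first term is immediate: $|s^1_t-s^2_t|=|\hE[S^1_t]-\hE[S^2_t]|\leq \hE|S^1_t-S^2_t|$, so $\sup_t|s^1_t-s^2_t|\leq\sup_t\hE|S^1_t-S^2_t|$. For the second term I would work directly from the defining identity $\hE[l^i(t,S^i_t-\hE[S^i_t]+\bar{l}^i_t)]=0$. Writing $\xi^i_t:=S^i_t-\hE[S^i_t]+\bar{l}^i_t$, the bi-Lipschitz bound \eqref{bilip} in ($H'_l$)(2) gives, for each fixed $t$,
\begin{align*}
c_l\,\hE|\xi^1_t-\xi^2_t|\leq \hE|l^1(t,\xi^1_t)-l^1(t,\xi^2_t)|\leq \hE|l^1(t,\xi^1_t)-l^1(t,\xi^2_t)+l^1(t,\xi^2_t)-l^2(t,\xi^2_t)|+\hE|l^2(t,\xi^2_t)-l^1(t,\xi^2_t)|.
\end{align*}
Since $\hE[l^1(t,\xi^1_t)]=\hE[l^2(t,\xi^2_t)]=0$, a more careful grouping is needed: I would insert $l^1(t,\xi^2_t)$ and use subadditivity of $\hE$ to get $|\hE[l^1(t,\xi^1_t)]-\hE[l^1(t,\xi^2_t)]|\leq \hE|l^1(t,\xi^2_t)-l^2(t,\xi^2_t)|\leq \sup_{x}\hE\sup_x|l^1(t,x)-l^2(t,x)|$, and then the lower bound in \eqref{bilip} applied to $\hE[l^1(t,\cdot)]$ (which is bi-Lipschitz in its real argument with the same constants, as in Remark \ref{lem3.3}/Lemma 3.12 of \cite{LiuW}) yields $c_l|\,\hE[\xi^1_t]-\hE[\xi^2_t]\,|\leq$ that same data term plus a correction. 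Observe that $\hE[\xi^i_t]=\bar{l}^i_t$ because $\hE[S^i_t-\hE[S^i_t]]=0$ (using $\hE[X-\hE[X]]$ may be nonzero under $G$-expectation — careful: it equals $\hE[-\hE[X]+X]$, and in general $\hE[X]+\hE[-X]\geq 0$; however $\hE[\xi^i_t-\bar{l}^i_t]=\hE[S^i_t-\hE[S^i_t]]$, which need not vanish), so I would instead avoid centering inside $\hE$ and estimate $|\bar{l}^1_t-\bar{l}^2_t|$ by combining the monotonicity of $H^i$ with a sandwich argument: for $\varepsilon>0$, $\hE[l^1(t,S^1_t-\hE[S^1_t]+\bar{l}^2_t+\eta_t)]\geq 0$ once $\eta_t$ dominates $C_l^{-1}$ times the sum of $\hE|S^1_t-S^2_t|+|\hE[S^1_t]-\hE[S^2_t]|$ and $\sup_x\hE\sup_x|l^1-l^2|$, by \eqref{bilip} and the triangle inequality; symmetrically for the other sign. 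This pins $|\bar{l}^1_t-\bar{l}^2_t|$ between $\pm$ a constant (depending only on $c_l,C_l$) times $\sup_t\hE\sup_x|l^1(t,x)-l^2(t,x)|+\sup_t\hE|S^1_t-S^2_t|$, uniformly in $t$.

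Combining the two estimates through the classical Skorokhod stability gives the first displayed inequality. For the second, write $X^1_t-X^2_t=(S^1_t-S^2_t)+(A^1_t-A^2_t)$, take $\sup_{t\in[0,T]}$ and then $\hE$, use subadditivity, the bound $\hE\sup_t|S^1_t-S^2_t|$ on the first piece, and the already-established $\sup_t|A^1_t-A^2_t|$ bound (which is deterministic, hence passes through $\hE$ unchanged) on the second, noting $\sup_t\hE|S^1_t-S^2_t|\leq\hE\sup_t|S^1_t-S^2_t|$ so the right-hand side can be written with the stronger $S$-norm as stated. The main obstacle I anticipate is the bookkeeping around $\bar{l}^i_t=H^i(\,\cdot\,)^{-1}(t,0,S^i_t)$: one must cleanly separate the perturbation of the loss function $l$ from the perturbation of the driving process $S$ while respecting that $\hE$ is only subadditive (not linear), so the naive "difference of defining equations" must be replaced by the two-sided sandwich via \eqref{bilip} described above; once that is set up, the remaining steps are routine applications of the triangle inequality and the classical Skorokhod estimate.
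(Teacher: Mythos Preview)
Your overall strategy coincides with the paper's: reduce $\mathbb{SP}(l^i,S^i)$ to the deterministic problem $\textrm{SP}(\bar{l}^i,s^i)$ via Theorem~\ref{SP2}, use $A^i_t=\sup_{u\le t}(s^i_u-\bar{l}^i_u)^-$ to bound $\sup_t|A^1_t-A^2_t|$ by $\sup_t|s^1_t-s^2_t|+\sup_t|\bar{l}^1_t-\bar{l}^2_t|$, handle the $s$-term trivially, and then deduce the $X$-estimate from $X^i=S^i+A^i$. The only substantive difference is in how you control $|\bar{l}^1_t-\bar{l}^2_t|$.

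The paper's device here is cleaner than your sandwich argument and sidesteps the subadditivity bookkeeping you flag. It writes
\[
0=\hE[l^1(t,S^1_t-\hE[S^1_t]+\bar{l}^1_t)]-\hE[l^2(t,S^2_t-\hE[S^2_t]+\bar{l}^2_t)]=\mathcal{I}^1_t+\mathcal{I}^2_t+\mathcal{I}^3_t,
\]
a three-term telescope in which $\mathcal{I}^1_t$ changes only $l^1\to l^2$, $\mathcal{I}^2_t$ changes only $S^1\to S^2$, and $\mathcal{I}^3_t$ changes only $\bar{l}^1_t\to\bar{l}^2_t$. Assuming (without loss) $\bar{l}^1_t>\bar{l}^2_t$, the pointwise lower bound in \eqref{bilip} plus translation invariance of $\hE$ on constants gives $\mathcal{I}^3_t\ge c_l(\bar{l}^1_t-\bar{l}^2_t)$, hence $|\bar{l}^1_t-\bar{l}^2_t|\le c_l^{-1}(|\mathcal{I}^1_t|+|\mathcal{I}^2_t|)$, and the two remaining pieces are bounded directly by $\hE\sup_x|l^1(t,x)-l^2(t,x)|$ and $2C_l\,\hE|S^1_t-S^2_t|$. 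Your sandwich (choose $\eta_t$ so that $\hE[l^1(t,S^1_t-\hE[S^1_t]+\bar{l}^2_t+\eta_t)]\ge 0$, then invoke the definition of $\bar{l}^1_t$) is a valid alternative and yields the same bound with the same constants; your stray ``$C_l^{-1}$'' should be $c_l^{-1}$, and the duplicated $\sup_x$ is a typo. Either way your detour through $\hE[\xi^i_t]$ is unnecessary and, as you correctly noted, unsound under $G$-expectation; simply drop it.
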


\begin{proof}
It suffices to prove the first estimate, for the reason that the second estimate can be obtained by the representation of $X$ and the triangle inequality. By the proof of Theorem \ref{SP}, $(x^i,A^i)$ is the solution of the Skorokhod problem $\textrm{SP}(\bar{l}^i,s^i)$, where $s^i_t=\hE[S^i_t]$, $\bar{l}^i_t=H^{-1}_i(t,0,S^i_t)$ and $H^{-1}_i(t,\cdot,S^i_t)$ is the inverse map of $H_i(t,\cdot,S^i_t)=\hE[l^i(t,S^i_t-\hE[S^i_t]+\cdot)]$, for $i=1,2$. Since $$A^i_t=\sup_{u\in[0,t]}(s^i_u-\bar{l}^i_u)^-,$$ we have
\begin{align}\label{equation2.9}
\sup_{t\in[0,T]}|A_t^1-A_t^2|\leq \sup_{t\in[0,T]}|s_t^1-s^2_t|+\sup_{t\in[0,T]}|\bar{l}^1_t-\bar{l}^2_t|.
\end{align} 
It is easy to check that 
\begin{align*}
0=&\hE[l^1(t,S_t^1-\hE[S_t^1]+\bar{l}^1_t)]-\hE[l^2(t,S_t^2-\hE[S_t^2]+\bar{l}^2_t)]=: \mathcal{I}_t^1+\mathcal{I}_t^2+\mathcal{I}_t^3,
\end{align*}
where
\begin{align*}
\mathcal{I}_t^1=&\hE[l^1(t,S_t^1-\hE[S_t^1]+\bar{l}^1_t)]-\hE[l^2(t,S_t^1-\hE[S_t^1]+\bar{l}^1_t)],\\
\mathcal{I}_t^2=&\hE[l^2(t,S_t^1-\hE[S_t^1]+\bar{l}^1_t)]-\hE[l^2(t,S_t^2-\hE[S_t^2]+\bar{l}^1_t)],\\
\mathcal{I}_t^3=&\hE[l^2(t,S_t^2-\hE[S_t^2]+\bar{l}^1_t)]-\hE[l^2(t,S_t^2-\hE[S_t^2]+\bar{l}^2_t)].
\end{align*}
Without loss of generality, assume that $\bar{l}^1_t>\bar{l}^2_t$. By the assumption of $l^2$, for any $(t,x)\in[0,T]\times\mathbb{R}$, we have
\begin{align*}
c_l(\bar{l}^1_t-\bar{l}^2_t)\leq l^2(t,x+\bar{l}^1_t)-l^2(t,x+\bar{l}^2_t)\leq C_l(\bar{l}^1_t-\bar{l}^2_t).
\end{align*}
Consequently,
\begin{align*}
c_l(\bar{l}^1_t-\bar{l}^2_t)\leq \mathcal{I}_t^3\leq C_l(\bar{l}^1_t-\bar{l}^2_t).
\end{align*}
Hence, there exists a constant $C\in[c_l,C_l]$, such that $\mathcal{I}_t^3=C(\bar{l}^1_t-\bar{l}^2_t)$. The above analysis indicates that 
\begin{align*}
|\bar{l}^1_t-\bar{l}^2_t|\leq \frac{1}{c_l}|\mathcal{I}^3_t|\leq \frac{1}{c_l}(|\mathcal{I}_t^1|+|\mathcal{I}_t^2|)
\leq \frac{1}{c_l}\left\{\hE\sup_{x\in\mathbb{R}}|l^1(t,x)-l^2(t,x)|+2C_l\hE|S_t^1-S_t^2|\right\}.
\end{align*}
Plugging the above inequality into equation \eqref{equation2.9}, we obtain the desired result.
\end{proof}\vspace{0.1in}

\begin{corollary}\label{corollary2.5}
Suppose $l$ satisfies Assumptions ($H_l$) and ($H'_l$), and $S$ satisfies Assumption ($H_S$). Let $(X,A)$ be the solution of the Skorokhod problem  $\mathbb{SP}(l,S)$. For any $0\leq s\leq t\leq T$, we have
\begin{align*}
|A_t-A_s|&\leq C\left\{\sup_{r\in[s,t]}\hE|S_r-S_s|+F(|t-s|)\right\}, \\
\text{and}\quad\hE \sup_{r\in[s,t]}|X_r-X_s|&\leq C\left\{\hE\sup_{r\in[s,t]}|S_r-S_s|+F(|t-s|)\right\}.
\end{align*}
\end{corollary}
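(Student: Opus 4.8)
The strategy is to reduce Corollary~\ref{corollary2.5} to Proposition~\ref{proposition2.4} by viewing the restriction of the data to the subinterval $[s,t]$ as a new instance of the Skorokhod problem with mean reflection, suitably re-initialized at time $s$. Fix $0\le s\le t\le T$. The idea is to introduce a ``frozen'' loss function and a ``frozen'' driving process that agree with $l$ and $S$ at time $s$ but are held constant thereafter, and then compare the genuine solution on $[s,t]$ with the trivial solution corresponding to the frozen data. Concretely, I would set $\tilde l(r,x)=l(s,x)$ and $\tilde S_r = S_s$ for $r\in[s,t]$ (equivalently, work on the shifted interval $[0,t-s]$). One checks quickly that $(\tilde l,\tilde S)$ satisfies ($H_l$), ($H'_l$), ($H_S$) on $[s,t]$ and that its $\mathbb{SP}$-solution is $(\tilde X,\tilde A)$ with $\tilde A\equiv A_s'$ for the appropriate constant and $\tilde X_r = \tilde S_r + \tilde A_r$ — in fact since $\tilde S$ is constant, $\tilde A$ is constant on $[s,t]$, so $\sup_{r\in[s,t]}|\tilde A_r-\tilde A_s|=0$.

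The main work is to identify the genuine solution $(X,A)$ restricted to $[s,t]$ as the $\mathbb{SP}$-solution of the data $(l, S^{(s)})$ on $[s,t]$, where $S^{(s)}_r := S_r + A_s$ (so that $X_r = S^{(s)}_r + (A_r-A_s)$ and $r\mapsto A_r-A_s$ lies in $I[s,t]$). This requires verifying conditions (a), (b), (c) of Definition~\ref{SPwithMR} on $[s,t]$: (a) and (b) are immediate from the corresponding properties of $(X,A)$ on $[0,T]$, while (c) follows because $\int_s^t \hE[l(r,X_r)]\,d(A_r-A_s) = \int_s^t \hE[l(r,X_r)]\,dA_r \le \int_0^T \hE[l(r,X_r)]\,dA_r = 0$. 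By uniqueness (Theorem~\ref{SP} or \ref{SP2}), this restricted problem \emph{is} $\mathbb{SP}(l,S^{(s)})$ on $[s,t]$. Then I apply Proposition~\ref{proposition2.4} on the interval $[s,t]$ with $(l^1,S^1)=(l,S^{(s)})$ and $(l^2,S^2)=(\tilde l,\tilde S)$: the term $\sup_{r}\hE\sup_{x}|l(r,x)-\tilde l(r,x)| = \sup_{r\in[s,t]}\hE\sup_{x}|l(r,x)-l(s,x)| \le F(|t-s|)$ by ($H'_l$)(1), and $\sup_{r\in[s,t]}\hE|S^{(s)}_r - \tilde S_r| = \sup_{r\in[s,t]}\hE|S_r - S_s|$ since the additive constant $A_s$ cancels. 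This yields $\sup_{r\in[s,t]}|(A_r-A_s)-(\tilde A_r-\tilde A_s)| = \sup_{r\in[s,t]}|A_r-A_s| \le C\{\sup_{r\in[s,t]}\hE|S_r-S_s| + F(|t-s|)\}$, which is the first estimate. The second follows from $X_r - X_s = (S_r-S_s) + (A_r-A_s)$, the triangle inequality, and the first estimate — noting that the deterministic bound on $|A_r-A_s|$ can be pulled out of the $\hE\sup$.

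The step I expect to be the main obstacle — really the only subtle point — is the bookkeeping in re-casting the restricted solution as a bona fide instance of the mean-reflected Skorokhod problem on $[s,t]$: one must be careful that shifting the interval to start at $s$ (or, equivalently, to $[0,t-s]$) preserves all the structural assumptions, that $\hE$ restricted to $\Omega_{[s,t]}$-measurable functions behaves correctly, and in particular that ($H_S$)'s compatibility condition $\hE[l(s,X_s)]\ge 0$ holds for the restricted problem — which it does, being exactly property (b) of the original solution at time $s$. Once this identification is in place, everything else is a direct application of Proposition~\ref{proposition2.4} plus the triangle inequality, with no new estimates required. Alternatively, if one prefers to avoid the interval-shifting formalism entirely, one can repeat the proof of Proposition~\ref{proposition2.4} verbatim but with $\bar l^i$ and $s^i$ replaced by their analogues on $[s,t]$ and using $A_t - A_s = \sup_{u\in[s,t]}(s_u-\bar l_u)^- - \sup_{u\in[0,s]}(\cdots)$; I would present the cleaner reduction argument and relegate this computational alternative to a remark.
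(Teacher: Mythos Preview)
Your strategy---freeze the data at time $s$ and compare the resulting Skorokhod problem with the genuine one via Proposition~\ref{proposition2.4}---is exactly the paper's. However, your setup contains an inconsistency. With $\tilde S_r = S_s$ you cannot verify ($H_S$) for the frozen problem on $[s,t]$: nothing guarantees $\hE[l(s,S_s)]\ge 0$, only $\hE[l(s,X_s)]\ge 0$ is known from condition~(b) of the original solution. And your claimed cancellation $\hE|S^{(s)}_r - \tilde S_r| = \hE|S_r - S_s|$ is false as written, since $S^{(s)}_r - \tilde S_r = (S_r + A_s) - S_s = (S_r - S_s) + A_s$. The fix is simple: take $\tilde S_r = S_s + A_s = X_s$. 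Then the frozen problem is well-posed with trivial solution $\tilde A\equiv 0$, and the additive $A_s$ genuinely cancels against the one in $S^{(s)}$.

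The paper sidesteps this bookkeeping entirely by staying on the full interval $[0,T]$ rather than restricting to $[s,t]$. It sets $S'_r = S_{r\wedge s}$ and $l'(r,x) = l(r\wedge s,x)$, observes that the $\mathbb{SP}(l',S')$-solution is simply $(X_{r\wedge s}, A_{r\wedge s})$, and applies Proposition~\ref{proposition2.4} directly to the pair $(l,S)$, $(l',S')$. Since all the differences $l-l'$, $S-S'$, $A-A'$ vanish for $r\le s$, the suprema in Proposition~\ref{proposition2.4} automatically localize to $[s,t]$. This formulation requires no interval shifting, no added constants, and no re-verification of the initial compatibility condition, and it delivers the same estimate with less to check.
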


\begin{proof}
We only prove the first estimate, while the second estimate can be obtained by the representation of $X$ and the triangle inequality. For any fixed $s\in[0,T]$ and $r\in[0,T]$, set 
$$S'_r=S_{r\wedge s} \quad \text{and}\quad l'(r,x)=l(r\wedge s,x).$$ Let $(X',A')$ be the solution to the Skorokhod problem  $\mathbb{SP}(l',S')$. It is easy to check that for any $r\in[0,T]$, $$X'_r=X_{r\wedge s}\quad \text{and}\quad A'_r=A_{r\wedge s}.$$ By Proposition \ref{proposition2.4}, we have
\begin{align*}
|A_t-A_s|&\leq \sup_{r\in[s,t]}|A_r-A'_r|\\
&\leq C\left\{\sup_{r\in[s,t]}\hE\sup_{x\in\mathbb{R}}|l(r,x)-l'(r,x)|+\sup_{r\in[s,t]}\hE|S_r-S'_r|\right\}\\
&\leq C\left\{\sup_{r\in[s,t]}\hE|S_r-S_s|+\sup_{r\in[s,t]}F(|r-s|)\right\}\\
&\leq C\left\{\sup_{r\in[s,t]}\hE|S_r-S_s|+F(|t-s|)\right\},
\end{align*}
as desired.
\end{proof}\vspace{0.1in}

\section{Mean reflected $G$-SDEs}
\label{sec:MRGSDEs}
In this section, we establish the well-posedness of the mean reflected $G$-SDE \eqref{MRGSDE}, recalled here as follows: for $t\in[0,T]$,
\begin{equation*}
		\begin{cases}
	X_t=x_0+\int_0^t b(s,X_s)ds+\int_0^t h(s,X_s)d\langle B\rangle_s+\int_0^t \sigma(s,X_s)dB_s+A_t,\vspace{0.2cm}\\
	\hE[l(t,X_t)]\geq 0\quad\text{and}\quad
	\int_0^T \hE[l(t,X_t)]dA_t=0.
\end{cases}
\end{equation*}
We consider the coefficient functions $b,h,\sigma:\Omega_T\times [0,T]\times\mathbb{R}\rightarrow\mathbb{R}$ satisfy the following conditions:
\begin{itemize}
\item[(A1)] For some $p\geq 1$ and each $x\in\mathbb{R}$, $b(\cdot,\cdot,x),h(\cdot,\cdot,x)\in M_G^p(0,T)$ and $\sigma(\cdot,\cdot,x)\in H_G^p(0,T)$.
\item[(A2)] For any $(\omega,t)\in\Omega_T\times[0,T]$ and any $x,x'\in \mathbb{R}$, there exists a constant $\kappa>0$, such that 
$$
|b(\omega,t,x)-b(\omega,t,x')|+|h(\omega,t,x)-h(\omega,t,x')|+|\sigma(\omega,t,x)-\sigma(\omega,t,x')|\leq \kappa|x-x'|.
$$
\end{itemize}

Now, we state the main result of this section.
\begin{theorem}\label{main}
Suppose Assumptions (A1), (A2), ($H_l$) and ($H'_l$) hold, and $\widehat{\mathbb{E}}[l(0,x_0)]\geq 0$. The mean reflected $G$-SDE \eqref{MRGSDE} admits a unique pair of solution $(X,A)\in S_G^p(0,T)\times I[0,T]$. 
\end{theorem}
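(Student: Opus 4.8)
The plan is to use a fixed-point (contraction mapping) argument on the space $S_G^p(0,T)$, treating the mean reflected $G$-SDE as a perturbation of a standard $G$-SDE coupled with a Skorokhod problem with mean reflection. First I would fix $Y\in S_G^p(0,T)$ and consider the decoupled $G$-SDE
\begin{equation*}
S_t^Y = x_0 + \int_0^t b(s,Y_s)\,ds + \int_0^t h(s,Y_s)\,d\langle B\rangle_s + \int_0^t \sigma(s,Y_s)\,dB_s,
\end{equation*}
which, under (A1) and (A2), admits a unique solution $S^Y\in S_G^p(0,T)$ by the theory of $G$-SDEs in \cite{G}; standard $G$-It\^o calculus (the BDG inequality in Proposition \ref{BDG}) also yields the moment bound $\hE[\sup_{t\le T}|S_t^Y|^p]<\infty$ and, via Proposition \ref{the3.7}, that $S^Y\in S_G^p(0,T)$ genuinely. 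Since $\hE[l(0,x_0)]\ge0$ and $S_0^Y=x_0$, the pair $(l,S^Y)$ satisfies Assumptions ($H_l$) and ($H_S$), so by Theorem \ref{SP} (equivalently Theorem \ref{SP2}, as ($H'_l$) is assumed) there is a unique solution $(X^Y,A^Y)\in S_G^p(0,T)\times I[0,T]$ to $\mathbb{SP}(l,S^Y)$. Define $\Phi(Y)=X^Y$; a fixed point of $\Phi$ is precisely a solution of \eqref{MRGSDE}, with the associated $A$ recovered from the Skorokhod problem.

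The core estimate is contraction. Given $Y^1,Y^2\in S_G^p(0,T)$, I would first control $\hE\sup_{t\le T}|S_t^{Y^1}-S_t^{Y^2}|^p$ by the Lipschitz assumption (A2) together with the BDG inequality of Proposition \ref{BDG} and Gr\"onwall's lemma, obtaining a bound of the form $C\,\hE\int_0^T |Y_s^1-Y_s^2|^p\,ds$ (or the same with a sup, after a standard argument), with $C$ depending on $\kappa,T,\bar\sigma,\underline\sigma$. Then, since the two Skorokhod problems share the same loss function $l$, Proposition \ref{proposition2.4} applies with $l^1=l^2=l$, giving
\begin{equation*}
\hE\sup_{t\in[0,T]}|X_t^{Y^1}-X_t^{Y^2}| \le C\,\hE\sup_{t\in[0,T]}|S_t^{Y^1}-S_t^{Y^2}|,
\end{equation*}
with $C$ depending only on $c_l,C_l$. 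Composing the two estimates gives control of $\Phi(Y^1)-\Phi(Y^2)$ by $Y^1-Y^2$. Because the SDE estimate carries a factor that is small for short time horizons (or, more robustly, one introduces a weight $e^{-\beta t}$ in the norm and chooses $\beta$ large), $\Phi$ becomes a contraction on $S_G^p(0,T)$ over a small interval $[0,T_0]$; the Banach fixed point theorem then yields a unique solution on $[0,T_0]$, and one patches intervals together to cover $[0,T]$, noting that the terminal value $X_{T_0}$ satisfies $\hE[l(T_0,X_{T_0})]\ge0$ so the restart hypothesis ($H_S$) continues to hold. Continuity of the compensator $A$ (hence $(X,A)\in S_G^p\times I[0,T]$) follows from Corollary \ref{corollary2.5} applied to $S=S^X$, using that $S^X$ has $L_G^1$-continuous paths via Proposition \ref{the3.7}.

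For uniqueness on the full interval, suppose $(X^1,A^1)$ and $(X^2,A^2)$ are two solutions of \eqref{MRGSDE}. Then each $X^i$ solves $\mathbb{SP}(l,S^{X^i})$ with $S^{X^i}$ the $G$-SDE driven by $X^i$, so the same two-step estimate (SDE stability plus Proposition \ref{proposition2.4}) gives $\hE\sup_{t\le T_0}|X_t^1-X_t^2|\le C\,T_0^{1/p}\,\hE\sup_{t\le T_0}|X_t^1-X_t^2|$ (or the weighted-norm analogue) on a short interval, forcing $X^1=X^2$ there; then $S^{X^1}=S^{X^2}$, and uniqueness of the Skorokhod solution (Theorem \ref{SP}) gives $A^1=A^2$; iterating over successive intervals yields uniqueness on $[0,T]$.

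\textbf{Main obstacle.} The principal difficulty is the stability estimate for the driving $G$-SDE in the $S_G^p$-norm (i.e. with the supremum inside the expectation) rather than merely in $M_G^p$: one must handle the $dB$-term through the upper BDG bound of Proposition \ref{BDG} while keeping the constants explicit enough that the Gr\"onwall/short-time (or weighted-norm) mechanism closes the contraction. A secondary subtlety is ensuring that at each restart point the pair $(l,S^X)$ still verifies ($H_S$) — this hinges on the flat-off property giving $\hE[l(t,X_t)]\ge0$ for all $t$, which is part of being a solution — and that the Skorokhod-problem estimates of Proposition \ref{proposition2.4} and Corollary \ref{corollary2.5}, proved there for the $S_G^p$ framework, transfer verbatim to the restarted problems; both are routine once the decomposition $\Phi=\text{(mean-reflection map)}\circ\text{(}G\text{-SDE map)}$ is in place.
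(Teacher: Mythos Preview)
Your proposal is correct and is essentially the paper's proof: define $\Gamma(U)=X$ where $X$ solves $\mathbb{SP}(l,\widetilde{X})$ for the decoupled integral $\widetilde{X}$ driven by $U$, prove $\Gamma$ is a contraction on $S_G^p$ over a short interval $[0,\delta]$, then patch successive intervals. The one place to sharpen is the Skorokhod stability step: the second line of Proposition~\ref{proposition2.4} that you invoke is only an $L^1$ bound on $X^{Y^1}-X^{Y^2}$ and does not compose with the $S_G^p$ estimate for the SDE part; the paper instead controls the \emph{deterministic} quantity $\sup_t|\widehat{A}_t|$ via the representation $A_t=\sup_{s\le t}L_s(\widetilde{X}_s)$ together with Remark~\ref{lem3.3} (equivalently, the first line of Proposition~\ref{proposition2.4}), raises it to the $p$-th power using $(\hE|\cdot|)^p\le\hE|\cdot|^p$, and then closes the $S_G^p$ contraction through $\widehat{X}=(\widetilde{X}-\widetilde{X}')+\widehat{A}$.
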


\begin{proof}
First, we fix a positive constant $\delta$, which will be determined later. Given a process $U\in S_G^p(0,\delta)$, for any $t\in[0,\delta]$, set 
\begin{displaymath}
\widetilde{X}_t=x_0+\int_0^t b(s,U_s)ds+\int_0^t h(s,U_s)d\langle B\rangle_s+\int_0^t \sigma(s,U_s)dB_s.
\end{displaymath} 
By Assumptions (A1) and (A2), H\"{o}lder's inequality and the BDG inequality  under $G$-expectation (Proposition \ref{BDG}), we may check that $\widetilde{X}\in S_G^p(0,\delta)$. Theorem \ref{SP} indicates that there exists a unique solution $(X,A)$ to the Skorokhod problem $\mathbb{SP}(l,\widetilde{X})$ on  the time interval $[0,\delta]$. We define the mapping $\Gamma: S_G^p(0,\delta)\rightarrow S_G^p(0,T)$ as
\begin{displaymath}
\Gamma(U)=X.
\end{displaymath}
We then show that $\Gamma$ is a contraction mapping when $\delta$ is small enough. Similarly, for given $U'\in S_G^p(0,\delta)$, define $\widetilde{X}'$ as above. Let $(X,A)$ and $(X',A')$ be the solutions to the Skorokhod problem associated with $(l,\widetilde{X})$ and $(l,\widetilde{X}')$, respectively. We define
\begin{align*}
	\widehat{b}_t=b(t,U_t)-b(t,U'_t), &\quad 	\widehat{h}_t=h(t,U_t)-h(t,U'_t), \quad 	\widehat{\sigma}_t=\sigma(t,U_t)-\sigma(t,U'_t), \\
&\widehat{A}_t=A_t-A'_t\quad \text{and}\quad\widehat{X}_t=X_t-X'_t.
\end{align*}
Simple calculation gives that 
\begin{equation}\label{e1}\begin{split}
\widehat{\mathbb{E}}\sup_{t\in[0,\delta]}|\widehat{X}_t|^p\leq &C\Bigg\{\widehat{\mathbb{E}}\sup_{t\in[0,\delta]}|\widetilde{X}_t-\widetilde{X}'_t|^p+\sup_{t\in[0,\delta]}|\widehat{A}_t|^p\Bigg\}\\
\leq &C\Bigg\{\widehat{\mathbb{E}}\sup_{t\in[0,\delta]}\Bigg|\int_0^t \widehat{b}_s ds\Bigg|^p+\widehat{\mathbb{E}}\sup_{t\in[0,\delta]}\Bigg|\int_0^t \widehat{h}_s d\langle B\rangle_s\Bigg|^p\\
&\hspace{4cm}+\widehat{\mathbb{E}}\sup_{t\in[0,\delta]}\Bigg|\int_0^t \widehat{\sigma}_s dB_s\Bigg|^p+\sup_{t\in[0,\delta]}|\widehat{A}_t|^p\Bigg\}\\
\leq &C\Bigg\{\widehat{\mathbb{E}}\int_0^{\delta} |\widehat{U}_t|^p ds+\sup_{t\in[0,\delta]}|\widehat{A}_t|^p \Bigg\},
\end{split}\end{equation}
where we used H\"{o}lder's inequality and the BDG inequality  under $G$-expectation (Proposition \ref{BDG}). Here, $C$ is a constant depending only on $p,\delta,\kappa,\overline{\sigma},\underline{\sigma}$. Recalling the first proof of Theorem \ref{SP}, we have 
\begin{equation}\label{e2}
\sup_{t\in[0,\delta]}|\widehat{A}_t|^p\leq \sup_{t\in[0,\delta]}|L_t(\widetilde{X}_t)-L_t(\widetilde{X}'_t)|^p
\leq \frac{C_l^p}{c_l^p}\sup_{t\in[0,\delta]}\widehat{\mathbb{E}}|\widetilde{X}_t-\widetilde{X}'_t|^p\leq C\widehat{\mathbb{E}}\int_0^{\delta} |\widehat{U}_t|^p ds,
\end{equation}
where we used Remark \ref{lem3.3} in the second inequality and $C$ is a constant depending on $p,\delta,\kappa$, $\overline{\sigma},\underline{\sigma}$, $C_l,c_l$. Equations \eqref{e1} and \eqref{e2} yield that 
\begin{displaymath}
\widehat{\mathbb{E}}\sup_{t\in[0,\delta]}|\widehat{X}_t|^p\leq C\delta \widehat{\mathbb{E}}\sup_{t\in[0,\delta]}|\widehat{U}_t|^p.
\end{displaymath}  
Therefore, for $\delta$ being sufficiently small, $\Gamma$ is a contraction mapping. We then obtain the existence and uniqueness of the solution, denoted by $(X^{(1)},A^{(1)})$, to the mean reflected $G$-SDE \eqref{MRGSDE} on the interval $[0,\delta]$. Now, let $N$ be such that $N=[\frac{T}{\delta}]+1$. For any $2\leq n\leq N$, by a similar analysis as above, the following reflected $G$-SDEs on the time interval $[(n-1)\delta,n\delta\wedge T]$ admits a unique solution $(X^{(n)},A^{(n)})$:
\begin{displaymath}
	\begin{cases}
		X^{(n)}_t=X^{(n-1)}_{(n-1)\delta}+\int_{(n-1)\delta}^t b(s,X^{(n)}_s)ds+\int_{(n-1)\delta}^t h(s,X^{(n)}_s)d\langle B\rangle_s+\int_{(n-1)\delta}^t \sigma(s,X^{(n)}_s)dB_s+A^{(n)}_t,\vspace{0.3cm}\\
		\widehat{\mathbb{E}}[l(t,X^{(n)}_t)]\geq0 \quad\text{and} \quad\int_{(n-1)\delta}^{n\delta\wedge T} \widehat{\mathbb{E}}[l(t,X^{(n)}_t)]dA^n_t=0.
	\end{cases}
\end{displaymath}
We define, for $t\in((n-1)\delta,n\delta\wedge T]$ and $1\leq n\leq N$, 
\begin{align*}
X_t=X^{(1)}_t \mathbbm{1}_{[0,\delta]}(t)+\sum_{n=2}^N X^{(n)}_t \mathbbm{1}_{((n-1)\delta,n\delta \wedge T]}(t)\quad\text{and}\quad
 A_t=A^{(n)}_t+\sum_{j=1}^{n-1} A^{(j)}_{jh}, 
\end{align*}
with the convention that $\sum_{j=1}^0 A^j_{jh}=0$. It is easy to check that $(X,A)$ is the solution to the mean reflected $G$-SDE \eqref{MRGSDE}. The uniqueness follows from the uniqueness for each small interval. The proof is complete.
\end{proof}\vspace{0.1in}

Let $(X,A)$ be the solution to the mean reflected $G$-SDE  \eqref{MRGSDE} and we provide its moment estimates in the proposition below. Set 
\begin{align}\label{U}
U_t=x_0+\int_0^t b(s,X_s)ds+\int_0^t h(s,X_s)d\langle B\rangle_s+\int_0^t \sigma(s,X_s)dB_s.
\end{align}
\begin{proposition}\label{prop2.6}
Suppose Assumptions (A1), (A2), ($H_l$), and ($H'_l$) hold, and $\hE[l(0,x_0)]\geq 0$. Then, there exists a constant $C$ depending on $p,\kappa,C_l,c_l,\overline{\sigma},\underline{\sigma},T$, such that 
\begin{align}\label{estimateX}
&\widehat{\mathbb{E}}\sup_{t\in[0,T]}|X_t|^p\\
&\leq C\Bigg(1+|x_0|^p+\hE\Bigg[\int_0^T|b(s,0)|^pds\Bigg]+\hE\Bigg[\int_0^T |h(s,0)|^p ds\Bigg]
+\hE\Bigg(\int_0^T |\sigma(s,0)|^2ds\Bigg)^{p/2}\Bigg).\nonumber
\end{align}
Furthermore, suppose for each $x\in\mathbb{R}$, $b(\cdot,\cdot,x),h(\cdot,\cdot,x),\sigma(\cdot,\cdot,x)\in S_G^p(0,T)$.
Then, for any $0\leq s\leq t\leq T$,
\begin{align}\label{estimateA}
|A_t-A_s|\leq C\Big\{|t-s|^{\frac{1}{2}}+F(|t-s|)\Big\}\quad\text{and}\quad  \hE|X_t-X_s|^p\leq C\Big\{|t-s|^{\frac{p}{2}}+F^p(|t-s|)\Big\}.
\end{align}
\end{proposition}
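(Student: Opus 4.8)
The plan is to derive the moment estimate \eqref{estimateX} via a contraction-type a priori bound on $[0,\delta]$ followed by iteration, and then to derive the continuity estimates \eqref{estimateA} from Corollary \ref{corollary2.5} applied to the process $U$ defined in \eqref{U}. First I would handle \eqref{estimateX}. Recalling that $X_t = U_t + A_t$ and that, by the first proof of Theorem \ref{SP}, $A_t = \sup_{s\in[0,t]} L_s(U_s)$ with $L_s(0) \leq \frac{1}{c_l}\bigl(\hE[l(s,0)]\bigr)^-$ bounded (using ($H_l$)(4) and ($H'_l$)(2)), I would bound $\sup_{t\in[0,\delta]}|A_t|^p \leq C + \frac{C_l^p}{c_l^p}\sup_{t\in[0,\delta]}\hE|U_t|^p$ via the Lipschitz estimate for $L_t$ in Remark \ref{lem3.3}. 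Then standard application of H\"older's inequality and the BDG inequality under $G$-expectation (Proposition \ref{BDG}), together with (A1)–(A2) written in the form $|b(s,X_s)| \leq |b(s,0)| + \kappa|X_s|$ (and similarly for $h,\sigma$), gives
\begin{align*}
\hE\sup_{t\in[0,\delta]}|X_t|^p \leq C\Bigl(\Lambda + \hE\int_0^\delta |X_s|^p\, ds\Bigr) + C\delta\,\hE\sup_{t\in[0,\delta]}|X_t|^p,
\end{align*}
where $\Lambda$ denotes the right-hand side of \eqref{estimateX} without the constant. Choosing $\delta$ small enough that $C\delta \leq \tfrac12$ absorbs the last term, and then a Gronwall-type iteration over the $N = [T/\delta]+1$ subintervals (with the terminal value of each block feeding into the next) yields \eqref{estimateX} with a constant depending only on $p,\kappa,C_l,c_l,\overline\sigma,\underline\sigma,T$.

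For the continuity estimates \eqref{estimateA}, I would first observe that under the additional hypothesis $b(\cdot,\cdot,x),h(\cdot,\cdot,x),\sigma(\cdot,\cdot,x) \in S_G^p(0,T)$ together with (A2) and \eqref{estimateX}, the process $U$ from \eqref{U} belongs to $S_G^p(0,T)$; moreover a direct computation using H\"older and BDG gives, for $0\leq s\leq t\leq T$,
\begin{align*}
\hE\sup_{r\in[s,t]}|U_r - U_s|^p \leq C\bigl(|t-s|^{p/2} + |t-s|^p\bigr) \leq C|t-s|^{p/2},
\end{align*}
and similarly $\sup_{r\in[s,t]}\hE|U_r-U_s| \leq C|t-s|^{1/2}$ (after raising to power $p$). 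Since $(X,A)$ solves the Skorokhod problem $\mathbb{SP}(l,U)$ on $[0,T]$ — this is exactly how the solution was built in the proof of Theorem \ref{main} — Corollary \ref{corollary2.5} applies verbatim and gives $|A_t - A_s| \leq C\{\sup_{r\in[s,t]}\hE|U_r-U_s| + F(|t-s|)\} \leq C\{|t-s|^{1/2} + F(|t-s|)\}$, which is the first part of \eqref{estimateA}. The second part follows from $X_t - X_s = (U_t-U_s) + (A_t-A_s)$, the triangle inequality, the $U$-increment bound above, and the elementary inequality $(a+b)^p \leq 2^{p-1}(a^p+b^p)$.

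The main obstacle I anticipate is organizing the iteration in the first part cleanly: the constant $\delta$ is fixed by the contraction requirement $C\delta \leq \tfrac12$ where $C$ itself depends on $\kappa,\overline\sigma,\underline\sigma,C_l,c_l$ but not on the data $\Lambda$, and one must check that passing from the estimate on $[(n-1)\delta, n\delta\wedge T]$ to the next block only inflates the constant by a fixed multiplicative factor, so that after $N$ steps the final constant still depends only on the listed parameters (and on $T$ through $N$). A secondary technical point is justifying that the bound $L_s(0) \leq \frac{1}{c_l}(\hE[l(s,0)])^- \leq \frac{\kappa}{c_l}$ holds uniformly — this uses ($H'_l$)(2) to get $l(s,x) - l(s,0) \geq c_l x$ for $x\geq 0$ hence $\hE[l(s, x)] \geq \hE[l(s,0)] + c_l x \geq 0$ once $x \geq -\hE[l(s,0)]/c_l$, combined with the growth bound $|l(s,0)| \leq \kappa$ from ($H_l$)(4). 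Both points are routine but need to be stated carefully so the constant genuinely has the claimed dependence.
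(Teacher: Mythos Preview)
Your proposal is correct. For the increment estimates \eqref{estimateA} it coincides with the paper's proof: both apply Corollary \ref{corollary2.5} with $S=U$, bound $\hE|U_t-U_s|^p\leq C|t-s|^{p/2}$ via H\"older and Proposition \ref{BDG} using \eqref{estimateX} together with the $S_G^p$ hypothesis on the coefficients, and finish with $X_t-X_s=(U_t-U_s)+(A_t-A_s)$.

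For the moment bound \eqref{estimateX} your route differs from the paper's in two ways. First, you control $|A_t|$ through the Lipschitz estimate for $L_t$ from Remark \ref{lem3.3} plus a uniform bound on $L_s(0)$, whereas the paper applies Corollary \ref{corollary2.5} at $s=0$ to obtain $|A_t|\leq C\{\sup_{r\in[0,t]}\hE|U_r-x_0|+F(T)\}$; both produce $|A_t|^p\leq C(1+\hE\sup_{r\in[0,t]}|U_r|^p)$, so this is only a cosmetic difference. Second, and more substantively, you work on a small interval $[0,\delta]$, absorb the term $C\delta\,\hE\sup_{t\in[0,\delta]}|X_t|^p$, and iterate over $N=[T/\delta]+1$ subintervals. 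The paper instead writes the inequality for every $t\in[0,T]$ in the form
\[
g(t)\leq C\Bigl\{\text{data}+\int_0^t g(s)\,ds\Bigr\},\qquad g(t)=\hE\sup_{s\in[0,t]}|X_s|^p,
\]
and applies Gr\"onwall's inequality in one step. The paper's argument is shorter and sidesteps exactly the bookkeeping issue you flag---tracking how constants propagate across blocks---while your approach is also valid and gives a constant with the stated dependence.
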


\begin{proof}
Note that $A$ can be viewed as the second component of the solution to the Skorokhod problem  $\mathbb{SP}(l,U)$. By Corollary \ref{corollary2.5}, for any $0\leq s\leq t\leq T$, we have 
\begin{align}
&|A_t|\leq C \Bigg\{\sup_{r\in[0,t]}\hE|U_r-U_0|+F(|t|)\Bigg\}\leq C\Bigg\{\sup_{r\in[0,t]}\hE|U_r-x_0|+F(|T|)\Bigg\},\\
&|A_t-A_s|\leq C\Bigg\{\sup_{r\in[s,t]}\hE|U_r-U_s|+F(|t-s|)\Bigg\}. \label{atas}
\end{align}
Simple calculation gives that 
\begin{align*}
\hE\sup_{s\in[0,t]}|X_s|^p&\leq C\Bigg\{\hE\sup_{s\in[0,t]}|U_s|^p+|A_t|^p\Bigg\}\\
&\leq C\Bigg\{1+|x_0|^p+\hE\Bigg[\int_0^t|b(s,0)|^pds\Bigg]+\hE\Bigg[\int_0^t |h(s,0)|^p ds\Bigg]\\
&\hspace{3cm}+\hE\Bigg(\int_0^t |\sigma(s,0)|^2ds\Bigg)^{p/2}+\hE\Bigg[\int_0^t |X_s|^p ds\Bigg]\Bigg\}\\
&\leq C\Bigg\{1+|x_0|^p+\hE\Bigg[\int_0^t|b(s,0)|^pds\Bigg]+\hE\Bigg[\int_0^t |h(s,0)|^p ds\Bigg]\\
&\hspace{3cm}+\hE\Bigg(\int_0^t |\sigma(s,0)|^2ds\Bigg)^{p/2}+\int_0^t \hE\Bigg[\sup_{r\in[0,s]}|X_r|^p\Bigg] ds\Bigg\}.
\end{align*}
Applying Gr\"onwall's inequality, we obtain the moment estimate \eqref{estimateX}.

Next, by H\"{o}lder's inequality and Proposition \ref{BDG}, we have
\begin{align*}
\hE|U_t-U_s|^p\leq &C\Bigg\{\hE\left|\int_s^t b(r,X_r)dr\right|^p+\hE\left|\int_s^t h(r,X_r)d\langle B\rangle_r \right|^p+\hE\left|\int_s^t \sigma(r,X_r)dB_r\right|^p\Bigg\}\\
\leq &C\Bigg\{\hE\left|\int_s^t |b(r,X_r)| dr\right|^p+\hE\left|\int_s^t |h(r,X_r)| dr\right|^p+\hE\left(\int_s^t |\sigma(r,X_r)|^2 dr\right)^{p/2}\Bigg\}\\
\leq &C\Bigg\{\hE\left|\int_s^t \Big(\sup_{r\in[s,t]}|b(r,0)|+\sup_{r\in[s,t]}|h(r,0)|+\sup_{r\in[s,t]}|X_r|\Big)dr\right|^p\\
 &\hspace{3cm}+\hE\left|\int_s^t \Big(\sup_{r\in[s,t]}|\sigma(r,0)|^2+\sup_{r\in[s,t]}|X_r|^2\Big)dr\right|^{p/2}\Bigg\}\\
 \leq &|t-s|^{p/2},
\end{align*}
where we used the estimate \eqref{estimateX} and the fact that $b,h,\sigma\in S_G^p(0,T)$ in the last step. Plugging the above inequality into equation \eqref{atas}, we obtain the first result in equation \eqref{estimateA}. Noting that $$X_t-X_s=U_t-U_s+A_t-A_s,$$ we obtain the estimate for $\hE|X_t-X_s|^p$. The proof is complete.
\end{proof}\vspace{0.1in}


Proposition \ref{prop2.6} establishes the continuity of the second component $A$ in the solution to the mean reflected $G$-SDE.  In the following, we show that subject to certain regularity conditions on the loss function $l$, the function $A$ exhibits Lipschitz continuity. Let $C^{1,2}_b([0,T]\times\mathbb{R})$ be the set of functions on $[0,T]$ possessing continuous first and second partial derivatives, and bounded partial derivatives up to second order.
\begin{proposition}\label{prop2.7}
 Let Assumptions (A1) and (A2) hold with $p\geq 2$. Assume that $l\in C^{1,2}_b([0,T]\times \mathbb{R})$ is bi-Lipschtiz (i.e., satisfies equation \eqref{bilip}) and strictly increasing in its second component with $l(0,x_0)\geq 0$ and 
\begin{align*}
\sup_{t\in[0,T]}\Big(\hE|b(t,0)|^2+\hE|h(t,0)|^2+\hE|\sigma(t,0)|^2\Big)<\infty.
\end{align*}
 Let $(X,A)$ be the solution to the mean reflected $G$-SDE \eqref{MRGSDE}. Then the function $A$ is Lipschitz continuous.
\end{proposition}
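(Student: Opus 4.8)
The plan is to reduce the Lipschitz continuity of $A$ to a quantitative lower bound on $dA_t$, obtained by tracking the infinitesimal behavior of $t\mapsto\widehat{\mathbb E}[l(t,X_t)]$ along the support of $dA$. Recall from the proof of Theorem~\ref{SP} that $A_t=\sup_{s\in[0,t]}L_s(U_s)$, where $U$ is the process in \eqref{U} and $(X,A)=\mathbb{SP}(l,U)$; by Remark~\ref{lem3.3}, $t\mapsto A_t$ already satisfies $|A_t-A_s|\le C(\sup_{r\in[s,t]}\widehat{\mathbb E}|U_r-U_s|+F(|t-s|))$, and under the present assumptions $\sup_{r\in[s,t]}\widehat{\mathbb E}|U_r-U_s|\le C|t-s|^{1/2}$ only. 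To upgrade the $1/2$ exponent to $1$, I would argue as follows. On the set $\{t:L_t(U_t)>0\}$, which carries $dA_t$, the flat-off condition gives $\widehat{\mathbb E}[l(t,X_t)]=0$; between two such times $s<t$, the function $g(r):=\widehat{\mathbb E}[l(r,X_r)]$ is nonnegative and vanishes at the endpoints of the support intervals, so it suffices to control $g$ from above by a linear function of the increment and to bound the increment of $A$ using the bi-Lipschitz lower bound $c_l|X_t-X_s|\le|l(t,X_t)-l(t,X_s)|$.

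The key computation is an It\^o-type expansion of $r\mapsto l(r,X_r)$. Since $l\in C^{1,2}_b$ and $X_r=U_r+A_r$ with $U$ a $G$-It\^o process and $A$ of finite variation (indeed continuous, by Proposition~\ref{prop2.6}), $G$-It\^o's formula yields, for $s\le t$,
\begin{align*}
l(t,X_t)=l(s,X_s)&+\int_s^t \partial_r l(r,X_r)\,dr+\int_s^t \partial_x l(r,X_r)\,b(r,X_r)\,dr+\int_s^t \partial_x l(r,X_r)\,h(r,X_r)\,d\langle B\rangle_r\\
&+\int_s^t \partial_x l(r,X_r)\,\sigma(r,X_r)\,dB_r+\tfrac12\int_s^t \partial_x^2 l(r,X_r)\,\sigma^2(r,X_r)\,d\langle B\rangle_r+\int_s^t \partial_x l(r,X_r)\,dA_r.
\end{align*}
Taking $\widehat{\mathbb E}$, using that $\partial_x l\ge c_l>0$, that the stochastic integral has a nonpositive (not zero) $G$-expectation so must instead be handled by the sublinearity bound $\widehat{\mathbb E}[\xi+\eta]\le\widehat{\mathbb E}[\xi]+\widehat{\mathbb E}[\eta]$ together with $\widehat{\mathbb E}[\int_s^t\cdots dB_r]=0$ for the $dB$ term and $\underline\sigma^2\,dr\le d\langle B\rangle_r\le\overline\sigma^2\,dr$, and invoking the moment estimate \eqref{estimateX} and the boundedness of the derivatives of $l$, one obtains
\[
\widehat{\mathbb E}[l(t,X_t)]\ \ge\ \widehat{\mathbb E}[l(s,X_s)]+c_l\underline\sigma^2(A_t-A_s)-C(t-s),
\]
for a constant $C$ depending on $p,\kappa,\|l\|_{C^{1,2}_b},\overline\sigma,\underline\sigma,T$ and the stated moment bounds on $b(\cdot,0),h(\cdot,0),\sigma(\cdot,0)$. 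Now take $s<t$ and set $\tau=\sup\{r\le t: L_r(U_r)=0\}$ (with $A_s=A_\tau$ if the whole of $(s,t]$ lies in the support of $dA$, otherwise replace $s$ by $\tau$); on $(\tau,t]$ we have $L_r(U_r)>0$, hence $\widehat{\mathbb E}[l(r,X_r)]=0$ for all $r$ in that interval by the flat-off condition, so in particular $\widehat{\mathbb E}[l(t,X_t)]=\widehat{\mathbb E}[l(\tau,X_\tau)]=0$. Plugging this into the displayed inequality gives $0\ge c_l\underline\sigma^2(A_t-A_\tau)-C(t-\tau)$, i.e. $A_t-A_\tau\le C'(t-\tau)\le C'(t-s)$; combined with $A_\tau=A_s$ (no increase of $A$ strictly before $\tau$ beyond what equals $A_s$, by definition of $\tau$ and monotonicity) this yields $0\le A_t-A_s\le C'(t-s)$, which is exactly Lipschitz continuity of $A$.

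The main obstacle I anticipate is the careful handling of the $G$-It\^o term $\tfrac12\int\partial_x^2 l\,\sigma^2\,d\langle B\rangle_r$ and of the fact that, unlike in the classical case, $\widehat{\mathbb E}$ is only sublinear: one cannot simply pass $\widehat{\mathbb E}$ through the sum in It\^o's formula, so the estimate must be organized as an upper bound on a sum of terms, isolating the favorable term $\int_s^t\partial_x l(r,X_r)\,dA_r\ge c_l(A_t-A_s)$ and the $d\langle B\rangle$ contribution $\ge c_l\underline\sigma^2$-weighted, while crudely bounding every other term by $C(t-s)$ via \eqref{estimateX} and boundedness of the derivatives of $l$; a minor technical point is justifying the use of $G$-It\^o's formula for $l(r,X_r)$ when $A$ is a priori only continuous of finite variation, which is standard since $A$ is deterministic and monotone. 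A secondary subtlety is that the inequality must be run on each maximal subinterval of the support of $dA$ contained in $(s,t]$ and the bounds summed, using that the total variation of $A$ on $(s,t]$ is $A_t-A_s$; since each piece contributes a bound proportional to its own length and the lengths sum to at most $t-s$, the linear estimate survives the decomposition.
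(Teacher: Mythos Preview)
Your approach differs from the paper's. The paper does not apply It\^o's formula to $l(r,X_r)$; instead it introduces $\widetilde L_t(Y)=\inf\{x\in\mathbb R:\widehat{\mathbb E}[l(t,x+Y)]\ge0\}$ (so $L_t=(\widetilde L_t)^+$) and shows directly that $t\mapsto\widetilde L_t(U_t)$ is Lipschitz. The key step there is to apply $G$-It\^o's formula to $r\mapsto l(r,\widetilde L_s(U_s)+U_r)$ with the level $\widetilde L_s(U_s)$ \emph{frozen} at time $s$, so that no $dA$ term ever appears; the bi-Lipschitz lower bound then gives
\[
|\widetilde L_t(U_t)-\widetilde L_s(U_s)|\le c_l^{-1}\bigl|\widehat{\mathbb E}[l(t,\widetilde L_s(U_s)+U_t)]-\widehat{\mathbb E}[l(s,\widetilde L_s(U_s)+U_s)]\bigr|\le C|t-s|,
\]
and Lipschitz continuity of $A_t=\sup_{r\le t}(\widetilde L_r(U_r))^+$ follows. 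Your route, keeping $X_r$ inside $l$ and extracting $c_l(A_t-A_s)$ from the $\int\partial_x l\,dA$ term, is a legitimate alternative and in some sense more direct, but it forces you to use the flat-off condition explicitly, which the paper's frozen-level trick sidesteps.

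Your endgame, however, has a genuine gap. The implication ``$L_r(U_r)>0$ on $(\tau,t]$, hence $\widehat{\mathbb E}[l(r,X_r)]=0$ for all $r$ in that interval by the flat-off condition'' is false: the flat-off condition gives $\widehat{\mathbb E}[l(r,X_r)]=0$ only $dA_r$-a.e., i.e.\ at those $r$ where $A_r=L_r(U_r)$, not wherever $L_r(U_r)>0$. If $L_\cdot(U_\cdot)$ drops after a peak, then $A_r$ stays at the peak value, $X_r=U_r+A_r>U_r+L_r(U_r)$, and $\widehat{\mathbb E}[l(r,X_r)]>0$. The fix is simple and renders the subinterval decomposition unnecessary: given $s<t$ with $A_t>A_s$, continuity of $r\mapsto L_r(U_r)$ yields some $t^*\in(s,t]$ with $L_{t^*}(U_{t^*})=A_t$; then $A_{t^*}=A_t$ and $L_{t^*}(U_{t^*})>0$, so $\widehat{\mathbb E}[l(t^*,X_{t^*})]=0$. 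Your key inequality on $[s,t^*]$ now gives
\[
c_l(A_t-A_s)=c_l(A_{t^*}-A_s)\le \widehat{\mathbb E}[l(t^*,X_{t^*})]-\widehat{\mathbb E}[l(s,X_s)]+C(t^*-s)\le C(t-s),
\]
since $\widehat{\mathbb E}[l(s,X_s)]\ge0$. Two minor corrections: the coefficient on $(A_t-A_s)$ is $c_l$, not $c_l\underline\sigma^2$, because the $dA$ integral does not involve $d\langle B\rangle$; and $\widehat{\mathbb E}\bigl[\int_s^t\partial_x l\,\sigma\,dB_r\bigr]=0$ exactly (the $G$-stochastic integral is a symmetric $G$-martingale), not merely nonpositive.
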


\begin{proof}
	Define the operator $\widetilde{L}_t:L_G^1(\Omega_T)\rightarrow \mathbb{R}$ as 
\begin{align}\label{construction'}
	\widetilde{L}_t(X)=\inf\Big\{x\in \mathbb{R}:\widehat{\mathbb{E}}[l(t,x+X)]\geq 0\Big\}.
\end{align}
Clearly, we have $L_t(X)=(\widetilde{L}_t(X))^+$. Let $(X,A)$ be the solution to the Skorokhod problem $\mathbb{SP}(l,S)$. By the proof of Theorem \ref{SP}, we have $A_t=\sup_{s\in[0,T]}(\widetilde{L}_s(S_s))^+$. We first prove that $t\rightarrow \widetilde{L}_t(U_t)$ is Lipschitz continuous on $[0,T]$. In fact, by the definition of $\widetilde{L}_t(U_t)$, we have 
\begin{align*}
\hE[l(t,\widetilde{L}_t(U_t)+U_t)]=0.
\end{align*}
If $x\geq y$, since $l$ is bi-Lipschitz and $\hE[\cdot]$ is sub-additive, we obtain that 
\begin{align*}
c_l(x-y)\leq -\hE[l(t,y+U_t)-l(t,x+U_t)]\leq \hE[l(t,x+U_t)]-\hE[l(t,y+U_t)].
\end{align*}
The above analysis implies that, for any $0\leq s<t\leq T$,
\begin{align*}
|\widetilde{L}_s(U_s)-\widetilde{L}_t(U_t)|&\leq \frac{1}{c_l}\Big|\hE[l(t,\widetilde{L}_s(U_s)+U_t)]-\hE[l(t,\widetilde{L}_t(U_t)+U_t)]\Big|\nonumber\\
&= \frac{1}{c_l}\Big|\hE[l(t,\widetilde{L}_s(U_s)+U_t)]-\hE[l(s,\widetilde{L}_s(U_s)+U_s)]\Big|.
\end{align*}
Applying $G$-It\^{o}'s formula \citep{lp} to $l(t,\widetilde{L}_s(U_s)+U_t)$, we obtain that 
\begin{align*}
l(t,\widetilde{L}_s(U_s)+U_t)=&l(s,\widetilde{L}_s(U_s)+U_s)+\int_s^t \Big[\partial_t l(r,\widetilde{L}_s(U_s)+U_r)+\partial_x l(r,\widetilde{L}_s(U_s)+U_r)b(r,X_r)\Big]dr\\
&+\int_s^t \Big[\partial_x l(r,\widetilde{L}_s(U_s)+U_r) h(r,X_r)+\frac{1}{2}\partial_x^2 l(r,\widetilde{L}_s(U_s)+U_r) \sigma^2(r,X_r)\Big]d\langle B\rangle_r\\
&+\int_s^t \partial_x l(r,\widetilde{L}_s(U_s)+U_r)\sigma(r,X_r)dB_r.
\end{align*}
By the above two equations  and the estimate \eqref{estimateX} in Proposition \ref{prop2.6}, we have
\begin{align*}
&|\widetilde{L}_s(U_s)-\widetilde{L}_t(U_t)|\\
&\leq \frac{1}{c_l}\hE\Bigg|\int_s^t \Big[\partial_t l(r,\widetilde{L}_s(U_s)+U_r)+\partial_x l(r,\widetilde{L}_s(U_s)+U_r)b(r,X_r)\Big]dr\\
&\hspace{1.5cm}+\int_s^t \Big[\partial_x l(r,\widetilde{L}_s(U_s)+U_r) h(r,X_r)+\frac{1}{2}\partial_x^2 l(r,\widetilde{L}_s(U_s)+U_r) \sigma^2(r,X_r)\Big]d\langle B\rangle_r \Bigg|\\
&\leq \frac{C}{c_l}\hE\Bigg[\int_s^t \Big(1+|b(r,0)|+|h(r,0)|+|\sigma(r,0)|^2+|X_r|+|X_r|^2\Big)dr \Bigg]\\
&\leq C|t-s|.
\end{align*}

Now we are ready to prove that $A$ is Lipschitz continuous. Since $L_t(U_t)=(\widetilde{L}_t(U_t))^+$, $L_\cdot(U_\cdot)$ is Lipschitz continuous with Lipschitz constant $C$. Note that $A$ can be viewed as the second component of the solution to the Skorokhod problem  $\mathbb{SP}(l,U)$. By the proof of Theorem \ref{SP}, for any $0\leq s\leq t\leq T$, we have
\begin{align*}
A_t=\sup_{r\in[0,t]}L_r(U_r)=\max\Bigg(\sup_{r\in[0,s]}L_r(U_r),\;\sup_{r\in[s,t]}L_r(U_r)\Bigg)=\max\Bigg(A_s,\;\sup_{r\in[s,t]}L_r(U_r)\Bigg).
\end{align*}
For any $r\in[s,t]$, it is easy to check that 
\begin{align*}
L_r(U_r)\leq L_s(U_s)+C(r-s)\leq A_s+C(t-s),
\end{align*}
which implies that $\sup_{r\in[s,t]}L_r(U_r)\leq A_s+C(t-s)$. Therefore,
\begin{align*}
0\leq A_s\leq A_t\leq A_s+C(t-s).
\end{align*}
The proof is complete.
\end{proof}\vspace{0.1in}

\section*{Acknowledgments}
	The research  was supported  by the National Natural Science Foundation of China (No. 12301178), the Natural Science Foundation of Shandong Province for Excellent Young Scientists Fund Program (Overseas) (No. 2023HWYQ-049),  the Fundamental Research Funds for the Central Universities, the Natural Science Foundation of Shandong Province (No. ZR2023ZD35) and  the Qilu Young Scholars Program of Shandong University.





\bibliography{bib-ms}

\end{document}